\documentclass[11pt, twoside]{article}
%\pdfoutput=1

%
\usepackage{graphicx}
%\graphicspath{{figures/}}
\usepackage[caption=false]{subfig}
%% Used for papers with subtables created with the subfig package
\captionsetup[subtable]{position=bottom}
\captionsetup[table]{position=bottom}
\usepackage{amsmath}
\usepackage{amssymb,amsfonts}
\usepackage{amsthm}
\usepackage{bm}
\usepackage{mathrsfs}
\usepackage{amssymb}
\usepackage{multirow}
\usepackage[most]{tcolorbox}
% Margins
\usepackage[margin=1in]{geometry}
% Algorithm stuff
\usepackage{algorithm}
\usepackage{algpseudocode}
\numberwithin{equation}{section}
\usepackage{multirow}
\usepackage{makecell}
\usepackage{booktabs}
% Author packages and commands
\theoremstyle{definition}
\newtheorem{theorem}{Theorem}

\newtheorem{lemma}{Lemma}

\newtheorem{definition}{Definition}

\newtheorem{example}{Example}
\newtheorem{assumption}{Assumption}

\usepackage{cite}
\usepackage{hyperref}
\usepackage[nameinlink]{cleveref}
\newcommand{\vertiii}[1]{{\left\vert\kern-0.25ex\left\vert\kern-0.25ex\left\vert #1 
		\right\vert\kern-0.25ex\right\vert\kern-0.25ex\right\vert}}

% Headers and Footers
\usepackage{fancyhdr}
\pagestyle{fancy}
            % no line under header
%\fancyhead[EC]{ Noel Walkington, Franziska Weber,
	%Yangwen Zhang}
%\fancyhead[OC]{A new reduced order model of  linear parabolic PDEs}
\fancyhead[L,R]{}
%\lfoot{}
%\rfoot{}
\cfoot{\thepage}

\begin{document}
	
	\title{A new reduced order model of  linear parabolic PDEs}%\tnoteref{mytitlenote}}

\author{Noel Walkington 
	\thanks{Department of Mathematics Science, Carnegie Mellon University, Pittsburgh, PA, USA (\mbox{noelw@andrew.cmu.edu},\quad \mbox{yangwenz@andrew.cmu.edu}).}
	\and  Franziska Weber 	\thanks{Department of Mathematics, university of california Berkeley, Berkeley, CA, USA (\mbox{fweber@math.berkeley.edu}).}
	\and
	Yangwen Zhang		\footnotemark[1] 
	%	\thanks{Department of Mathematics Science, Carnegie Mellon University, Pittsburgh, PA, USA (\mbox{yangwenz@andrew.cmu.edu}).}
}

\date{\today}

\maketitle

\begin{abstract}
How to build an accurate reduced order model (ROM) for  multidimensional time dependent partial differential equations (PDEs) is quite open. In this paper, we propose a new ROM for linear parabolic PDEs.  We prove that our new method can be orders of magnitude faster than
standard solvers, and is also much less memory intensive. Under some assumptions on the problem data, we prove that the convergence rates of the new method is the same with standard solvers. Numerical experiments are presented to confirm our theoretical result.
\end{abstract}

\section{Introduction}
It is well known that the computational cost of solving multidimensional time dependent partial differential equations (PDEs) can be extremely high. Many model order reduction  (MOR) methods  have been proposed to reduce the computational cost.  Proper orthogonal decomposition (POD) methods are  widely used  in both academic and industry  \cite{MR3419868}. 

There are extensive studies to prove the error bounds of the POD-ROM \cite{Singler_New_SINUM_2014,Kunisch_Volkwein_NM_2001,Chapelle_Gariah_Sainte_Galerkin_M2AN_2013,MR3987425,MR4172732,MR4296762,Kunisch_Volkwein_SINUM_2002} under the assumption that the data in the POD-ROM are not varied from the original PDE model. However, when data is changed in the POD-ROM, the solution of the POD-ROM can be either surprisingly accurate or completely unrelated to the solution of the full order model (FOM). It is worthwhile mentioning that in \cite{MR2327057}, the authors studied the effects of small perturbations in the POD-ROM. They  explained why in some applications this sensitivity is a concern while in others it is not. Therefore, hybrid methods (FOM/ROM switched back and forth) are commonly used in practice \cite{MR2595807,MR2873253,MR3697034,bai2020deim}. However, the savings are not remarkable.

How to build an accurate and efficient ROM for time dependent PDEs is still an open problem.
As a first step toward this open problem, we propose a new ROM for linear parabolic PDEs.   Next, we briefly describe the standard finite element method (FEM)  and our algorithm for solving the following heat equation:
\begin{align}\label{heat}
	\begin{split}
		u_t - \Delta u & = f(x) \quad \;\;\textup{in}\; \Omega\times (0,T],\\
		u & = 0  \qquad \quad \textup{on}\; \partial \Omega\times (0,T],\\
		u(\cdot, 0)& = 0 \qquad \quad \textup{in}\; \Omega. 
	\end{split}
\end{align}

Let $M$ and $A$ be the mass  and stiffness matrices, respectively, and  $b$ is the load vector. Here we assumed that the right hand side $f$ is independent of time. For general data, see \Cref{NEwway}. 	 The semi-discrete of \eqref{heat} is to find $\alpha_h(t)\in \mathbb R^N$ satisfying
\begin{align}\label{semi-d}
	\begin{split}
		M \alpha_h'(t) + A \alpha_h(t) &= b,\quad t\in (0,T],\\
		\alpha_h(0)& = 0,
	\end{split}
\end{align}
where $\alpha_h(t)$ is the coefficient of the FEM solution under the FEM basis functions. Implicit solvers are commonly used to solve the ODE system \eqref{semi-d}. 	The computational cost can be huge (see \Cref{table_0}) for solving this system.  Next, we introduce our  new algorithm.
\begin{itemize}
	\item [(1)]  We generate a Krylov sequence for the problem data;  see lines 1-4 in \Cref{Brrief}. This is different from the standard POD method. The standard POD method  uses the solution data to generate a reduced basis, which is the drawback of the  method---relies too heavily on the solution data. The parameter $\ell$ we choose is a small integer, usually less than $10$. This is the main computational cost in our new algorithm. To adaptively choose $\ell$, see \Cref{D_matrixalgorthm2}.
	
	\vspace{0.15cm}
	
	\item [(2)] We use the idea from POD to find $r$ optimal reduced basis functions, the span of these  $r$ functions generates a dimension $r$ space; see  lines 5-9 in \Cref{Brrief}\footnote{We adopt Matlab notation herein:  $\Psi(:, 1:r)$ be the first $r$ columns of $\Psi$;   $\Lambda(1:r,1:r)$ be the $r-$th leading principal minor of $\Lambda$;  \texttt{eig} is the Matlab built-in function to find  the eigenvectors and eigenvalues of a matrix.}. 
	
	\vspace{0.15cm}
	
	\item[(3)] We project the heat equation \eqref{heat} onto the above reduced space, and obtain the reduced mass matrix $M_r$,  the reduced stiffness matrix $A_r$ and the reduced load vector $b_r$; see line 10 in \Cref{Brrief}. Then the semi-discrete of the ROM is to find $\alpha_r(t)\in \mathbb R^r$ satisfying
	\begin{align}\label{ROM111}
		\begin{split}
			M_r  \alpha_r'(t) + A_r \alpha_r (t) &= b_r, \qquad t\in (0,T],\\
			\alpha_r(0)& = 0.
		\end{split}
	\end{align}
	
	\vspace{0.15cm}
	
	\item[(4)] For the time integration,  we choose  the backward Euler for the first step and then apply the two-steps backward differentiation formula (BDF2); see lines 11-14 in \Cref{Brrief}. 
	
	\vspace{0.15cm}
	
	\item[(5)] We then return to the FOM. We note that $Q\alpha_r^n$ is the coefficient of the  solution at time $t_n$ under the standard FEM basis functions.   
\end{itemize}
\begin{algorithm}[H]
	\caption{Our new algorithm for solving the heat equation \eqref{heat}}
	\label{Brrief}
	{\bf{Input}:}   $\ell$, $M$, $A$, $b$, \texttt{tol}
	\begin{algorithmic}[1]
		\State Solve $A\texttt{u}_{1} = b$;
		\For{$i=2$ to $\ell$}
		\State Solve $A\texttt{u}_{i} = M \texttt{u}_{i-1}$;		
		\EndFor
		\State Set $U_\ell = [\texttt{u}_{1} \mid \texttt{u}_{2} \mid \ldots\mid \texttt{u}_{\ell}]\in \mathbb R^{N\times \ell}$;
		\State Set $K_\ell = U_\ell^\top A U_\ell\in \mathbb R^{\ell\times \ell}$;
		\State $[\Psi, \Lambda] = \texttt{eig}(K_\ell)$;
		\State Find minimal $r$ such that $\sum_{i=1}^r{\Lambda(i,i)}/\sum_{i=1}^\ell{\Lambda(i,i)}\ge 1- \texttt{tol}$;
		\State Set $Q = U_\ell \Psi(:,1:r)(\Lambda(1:r;1:r))^{-1/2}$;
		\State Set $M_r = Q^\top M Q$; $A_r = Q^\top A Q$; $b_r = Q^\top b$;
		\State Solve $\left(\dfrac{1}{\Delta t}M_{r}+ A_r\right) \alpha_r^1= b_{r}$;
		\For {$n = 2$ to $N_T$}	
		\State Solve $\left(\dfrac{3}{2\Delta t}M_{r}+  A_r\right) \alpha_r^n=  \dfrac{1}{\Delta t}M_{r}\left(2\alpha_r^{n-1} - \dfrac 1 2 \alpha_r^{n-2}\right)+ b_{r}$;
		\EndFor
		\State {\bf return} $Q, \alpha_r^n$
	\end{algorithmic}
\end{algorithm}
Numerical result in \Cref{table_1} shows that our new ROM for the linear parabolic PDEs is accurate. Comparing with \Cref{table_0}, we see that our new algorithm is  efficient, and the savings are remarkable.

In \Cref{Error_analysis}, we prove that the convergences rates of our new algorithm is  the same with the standard FEM under an assumption on the problem data; see \Cref{Main_res}.  Numerical experiments in \Cref{NEwway} are presented to confirm our theoretical result even when the assumption is not satisfied. 

Moreover,  we show that the singular values of the Krylov sequence are exponential decaying; see \Cref{UAU}. This guarantees that the dimension of our ROM is extremely low. To the best of our knowledge, this is one of the first theoretical result in MOR.

\section{The new algorithm and its implementation}\label{Wei_POD}

Throughout the paper, we assume $\Omega\subset \mathbb R^d$, $d=1, 2,3$, and when $d\geq 2$, $\Omega$ is a bounded polyhedral domain. Let $V$ and $H$ be real separable Hilbert spaces and suppose that $V$ is dense in $H$ with compact embedding. By $(\cdot, \cdot)$ and $\|\cdot \|$ we denote the inner product and norm  in $H$. The inner product in $V$ is given by a symmetric bounded, coercive, bilinear form $a: V \times V \rightarrow \mathbb{R}$ :
$$
(\varphi, \psi)_{V}=a(\varphi, \psi) \quad \text { for all } \varphi, \psi \in V
$$
with an associated norm given by $\|\cdot\|_{V}=\sqrt{a(\cdot, \cdot)}$. Since $V$ is continuously embedded into $H$, there exists a constant $C_V>0$ such that
\begin{align}\label{embedding}
	\|\varphi\| \leq  C_V\|\varphi\|_{V} \quad \text { for all } \varphi \in V.
\end{align}

Let  $\langle\cdot, \cdot\rangle_{V^{\prime}, V}$ denotes the duality pairing between $V$ and its dual $V'$.  By identifying $H$ and its dual $H^{\prime}$ it follows that
$$
V \hookrightarrow H=H^{\prime} \hookrightarrow V^{\prime},
$$
each embedding being continuous and dense.

For given $f \in L^{2}(0, T ; H)$ and $u_{0} \in H$ we consider the linear parabolic problem:
\begin{align}\label{Parabolic_Original_Model}
	\begin{split}
		\frac{{\rm d}}{{\rm d} t}( u(t), v)+a(u(t), v)&= (f, v)	\qquad\;  \forall v\in V,\\
		(u(0), v) &=  (u_0, v) \qquad  \forall v\in V.	
	\end{split}
\end{align}

\subsection{Finite element method (FEM)} Let $\mathcal T_h$ be a collection of disjoint shape regular simplices that partition $\Omega$. The functions $\varphi_{1}, \ldots, \varphi_{N}$ denote $N$ linearly independent nodal basis functions. On each element $K\in \mathcal T_h$, $\varphi_i|_K\in \mathcal P^k(K)$, where $ \mathcal P^k(K)$ denotes the set of polynomials of degree at most $k$ on the element $K$. Then we define the $N$-dimensional subspace:
$$
V_h=\operatorname{span}\left\{\varphi_{1}, \ldots, \varphi_{N}\right\} \subset V.
$$

To simplify the presentation, we assume the right hand side (RHS) $f$ is independent of time and the initial condition $u_0=0$. We shall discuss the general case ($f$ depends on both time and space and nonzero initial condition) in \Cref{NEwway}.
First, we consider the semi-discretization of \eqref{Parabolic_Original_Model}, i.e., find $u_h(t) \in C((0, T]; V_{h})$   satisfying
\begin{align}\label{Weak_heat}
	\begin{split}
		\frac{\mathrm{d}}{\mathrm{d} t}\left( u_{h}(t), v_h\right)+ a( u_h(t),  v_h) &= (f, v_h),  \quad\quad\quad\quad \;\forall v_h\in V_h,\\
		(u_h(0), v_h)&=0,  \qquad\qquad\quad\quad \;\;\forall v_h\in V_h.
	\end{split}
\end{align}
Next, let $0=t_0<t_1<\ldots<t_{N_T}=T$ be a given grid in $[0, T]$ with equally step size $\Delta t$. To solve \eqref{Weak_heat} we apply the backward Euler for the first step and then apply the two-steps backward differentiation formula (BDF2). Specifically, we find $u_h^n\in V_h$ satisfying
\begin{align}\label{discreteODE}
	\begin{split}
		\left( \partial_t^+ u_h^n, v_h\right)+ a\left(u_h^n, v_h\right) &= (f, v_h), \quad \forall v_h\in V_h,\\
		(u_h^0, v_h) &= 0, \qquad\quad\; \forall v_h\in V_h,
	\end{split}
\end{align}
where
\begin{align*}
	\partial_t^+ u_h^n = \begin{cases}
		\dfrac{u_h^n - u_h^{n-1}}{\Delta t},\qquad\qquad\quad\;  n=1,\\[0.4cm]
		\dfrac{3u_h^n - 4u_h^{n-1} + u_h^{n-2}}{2\Delta t},\quad n\ge 2.
	\end{cases}
\end{align*}
The computational cost can be very high if the mesh size $h$ and time step $\Delta t$ are small. 
\begin{example}\label{Example0}
	In  this example, let  $\Omega = (0,1)\times (0,1)$, we consider the equation \eqref{Parabolic_Original_Model} with
	\begin{align*}
		a(u,v) = (\nabla u, \nabla v), \qquad u_0 = 0, \qquad f = 10^4  (x-0.1)(y-0.2)(x-0.3)(y-0.4).	\end{align*}
	We use linear finite elements for the spatial discretization, and  for the time discretization, we use the backward Euler for the first step and then apply the two-steps backward differentiation formula ({BDF2}) with time step $\Delta t = h$, here $h$ is	the mesh size (max diameter of the triangles in the mesh). 		 For solving linear systems, we apply the Matlab built-in solver backslash ($\backslash$) in 2D  and algebraic multigrid methods \cite{MR972756} in 3D. We report the wall time\footnote{All the code for all examples in the paper has been created by the authors using Matlab R2020b
		and has been run on a laptop with MacBook Pro, 2.3 Ghz8-Core Intel Core i9 with 64GB 2667 Mhz DDR4. We use the Matlab built-in function \texttt{tic}-\texttt{toc} to denote  the real simulation time.}  in \Cref{table_0}. 
	\begin{table}[H]
		\centering
		{
			\begin{tabular}{c|c|c|c|c|c|c|c}
				\Xhline{1pt}
				
				\cline{3-8}
				$h$	& $1/2^4$ &$1/2^5$ 
				&$1/2^6$ 
				&$1/2^7$  &$1/2^8$ &$1/2^9$  &$1/2^{10}$ 
				\\
				\cline{1-8}
				{Wall time (s)}
				& 	  0.130&   0.072&   0.535&   4.456&   33.75&   340.1&   3754\\ 
				\Xhline{1pt}

			\end{tabular}
		}
		\caption{ The wall time (seconds) for the simulation of \Cref{Example0}.}\label{table_0}
	\end{table}		
\end{example}

\subsection{Reduced basis generation} 
Recall that the source term $f$ does not depend on time.  Let $\ell$ be a small integer and $\mathfrak{u}_h^0 = f$.  For $1\le i \le \ell$,  we find $\mathfrak{u}_h^i\in V_h$ satisfying
\begin{align}\label{Generate_basis}
	\begin{split}
		a\left( \mathfrak{u}_h^i,  v_{h}\right) 
		=\left(\mathfrak{u}_h^{i-1}, v_{h}\right) \qquad  \forall v_{h} \in V_{h}.
	\end{split}
\end{align}

To find an optimal reduced basis, we then consider the following minimization problem:
\begin{gather}\tag{P1}\label{P1}
	\begin{split}
		\min _{{\widetilde \varphi}_{1}, \ldots, {\widetilde\varphi}_{r} \in V_h}  \sum_{j=1}^{\ell}  \left\|\mathfrak{u}_h^j-\sum_{i=1}^{r}\left( \mathfrak{u}_h^j, {\widetilde\varphi}_{i}\right)_{V} {\widetilde\varphi}_{i}\right\|_{V}^{2} \quad \text { s.t. }\left({\widetilde\varphi}_{i}, {\widetilde\varphi}_{j}\right)_V=\delta_{i j},\quad  1 \leq i, j \leq r. 	
	\end{split}
\end{gather}

The proof of the following lemma can be found in \cite[Theorem 2.7]{gubisch2017proper}.
\begin{lemma}\label{Optimization}
	The solution to  problem \eqref{P1} is given by the first $r$ eigenvectors  of $\mathcal{R}: V_h\to V_h$:
	\begin{align*}
		\mathcal{R} \varphi= \sum_{j=1}^{\ell}\left( \mathfrak{u}_h^j, \varphi\right)_{V} \mathfrak{u}_h^j  \quad \textup  { for } \varphi \in V_h. 
	\end{align*}
	Furthermore, let $\{\lambda_i({\mathcal R})\}_{i=1}^\ell$ be the first $\ell$ eigenvalues of $\mathcal R$, then we have 
	\begin{align*}
		\textup{argmin\eqref{P1}}= \sum_{i=r+1}^\ell \lambda_i({\mathcal R}).
	\end{align*}		
\end{lemma}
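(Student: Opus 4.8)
The plan is to reduce problem \eqref{P1} to a trace–maximization problem for the self-adjoint operator $\mathcal{R}$ and then invoke the spectral characterization of such maxima. First I would record the structural properties of $\mathcal{R}$: viewed on the finite-dimensional space $V_h$ equipped with the inner product $(\cdot,\cdot)_V$, the operator $\mathcal{R}\varphi = \sum_{j=1}^{\ell}(\mathfrak{u}_h^j,\varphi)_V\,\mathfrak{u}_h^j$ is self-adjoint and positive semidefinite, since $(\mathcal{R}\varphi,\varphi)_V = \sum_{j=1}^{\ell}|(\mathfrak{u}_h^j,\varphi)_V|^2 \ge 0$, and it has rank at most $\ell$. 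By the spectral theorem it therefore admits a $(\cdot,\cdot)_V$-orthonormal eigenbasis $\{e_k\}$ with real eigenvalues $\lambda_1(\mathcal{R}) \ge \lambda_2(\mathcal{R}) \ge \cdots \ge 0$, at most $\ell$ of which are nonzero.

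Next I would rewrite the objective. Because the constraint forces $\{\widetilde\varphi_i\}_{i=1}^{r}$ to be $(\cdot,\cdot)_V$-orthonormal, the map $P_r\psi = \sum_{i=1}^{r}(\psi,\widetilde\varphi_i)_V\,\widetilde\varphi_i$ is the $V$-orthogonal projection onto $\operatorname{span}\{\widetilde\varphi_1,\dots,\widetilde\varphi_r\}$, so the Pythagorean identity gives, for each $j$,
\[
\Bigl\|\mathfrak{u}_h^j - \sum_{i=1}^{r}(\mathfrak{u}_h^j,\widetilde\varphi_i)_V\,\widetilde\varphi_i\Bigr\|_V^2 = \|\mathfrak{u}_h^j\|_V^2 - \sum_{i=1}^{r}|(\mathfrak{u}_h^j,\widetilde\varphi_i)_V|^2.
\]
Summing over $j$, exchanging the order of summation, and using the identity $\sum_{j=1}^{\ell}|(\mathfrak{u}_h^j,\widetilde\varphi_i)_V|^2 = (\widetilde\varphi_i,\mathcal{R}\widetilde\varphi_i)_V$, I would transform \eqref{P1} into the equivalent problem of maximizing the partial trace $\sum_{i=1}^{r}(\widetilde\varphi_i,\mathcal{R}\widetilde\varphi_i)_V$ over $(\cdot,\cdot)_V$-orthonormal $r$-frames, the objective of \eqref{P1} being exactly $\sum_{j=1}^{\ell}\|\mathfrak{u}_h^j\|_V^2$ minus this trace.

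The decisive step, and the one I expect to be the main obstacle, is the trace-maximization (Ky Fan / Courant–Fischer) principle: over all $(\cdot,\cdot)_V$-orthonormal systems $\{\widetilde\varphi_i\}_{i=1}^{r}$ the quantity $\sum_{i=1}^{r}(\widetilde\varphi_i,\mathcal{R}\widetilde\varphi_i)_V$ is maximized by the top $r$ eigenvectors of $\mathcal{R}$, with maximal value $\sum_{i=1}^{r}\lambda_i(\mathcal{R})$. I would prove this by expanding each $\widetilde\varphi_i$ in the eigenbasis $\{e_k\}$, so that the partial trace becomes $\sum_k \lambda_k(\mathcal{R})\,c_k$ with $c_k = \sum_{i=1}^{r}|(\widetilde\varphi_i,e_k)_V|^2$. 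Extending the frame to a full orthonormal basis and applying Parseval shows $0 \le c_k \le 1$ and $\sum_k c_k = r$; maximizing the linear functional $\sum_k \lambda_k(\mathcal{R})\,c_k$ under these constraints puts full weight on the $r$ largest eigenvalues, which is achieved precisely by the choice $\widetilde\varphi_i = e_i$.

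Finally I would identify the optimal value. Computing the trace of $\mathcal{R}$ in any orthonormal basis and using Parseval yields $\operatorname{tr}(\mathcal{R}) = \sum_{j=1}^{\ell}\|\mathfrak{u}_h^j\|_V^2 = \sum_{i=1}^{\ell}\lambda_i(\mathcal{R})$, the last equality because $\mathcal{R}$ has at most $\ell$ nonzero eigenvalues. Substituting into the reduced problem gives
\[
\min\eqref{P1} = \sum_{i=1}^{\ell}\lambda_i(\mathcal{R}) - \sum_{i=1}^{r}\lambda_i(\mathcal{R}) = \sum_{i=r+1}^{\ell}\lambda_i(\mathcal{R}),
\]
with the minimizer given by the span of the first $r$ eigenvectors, as claimed. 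The only genuinely nontrivial ingredient is the Ky Fan principle; the remaining steps are orthogonality bookkeeping.
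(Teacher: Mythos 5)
Your proposal is correct, but it cannot be compared line-by-line with a proof in the paper, because the paper offers none: Lemma~\ref{Optimization} is justified there solely by a citation to \cite[Theorem 2.7]{gubisch2017proper}. What you have written is a complete, self-contained derivation of exactly that cited result, and it is the standard one for POD optimality: the Pythagorean reduction of the objective to $\sum_{j=1}^{\ell}\|\mathfrak{u}_h^j\|_V^2 - \sum_{i=1}^{r}(\mathcal{R}\widetilde\varphi_i,\widetilde\varphi_i)_V$, the Ky Fan trace-maximization step proved via the coefficients $c_k=\sum_{i=1}^{r}|(\widetilde\varphi_i,e_k)_V|^2$ with $0\le c_k\le 1$ and $\sum_k c_k=r$, and the trace identity $\operatorname{tr}(\mathcal{R})=\sum_{j=1}^{\ell}\|\mathfrak{u}_h^j\|_V^2=\sum_{i=1}^{\ell}\lambda_i(\mathcal{R})$ (valid since $\operatorname{rank}\mathcal{R}\le\ell$) to identify the optimal value as $\sum_{i=r+1}^{\ell}\lambda_i(\mathcal{R})$. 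Each step checks out: the Bessel/Parseval bounds on the $c_k$ are the right way to make the linear-programming argument rigorous, and your argument correctly yields that the top-$r$ eigenvectors attain the maximum (uniqueness is neither claimed nor needed). Two cosmetic remarks: the paper's ``$\textup{argmin}$'' is an abuse of notation for the minimal value, which is what you computed; and it would be worth one sentence noting that on the finite-dimensional space $V_h$ the spectral theorem applies directly, so no compactness considerations are needed. In short, the paper buys brevity by outsourcing the proof; you supply the missing argument, and it is sound.
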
%\todo{maybe add a reference for the proof?}

\Cref{Optimization} is not  practical since   $\mathcal R$ is an abstract operator. We introduce the following lemma.
\begin{lemma}\label{Eig}
	Let $\mathcal U:  X \to Y$ be a compact linear operator, where $X$ and $Y$ are separable Hilbert spaces and  $\mathcal U^{*}: Y \rightarrow X$ is the Hilbert adjoint operator. The the nonzero (positive) eigenvalues $\left\{\lambda_{k}\right\}$ of $\mathcal U^{*} \mathcal U$ and $\mathcal U \mathcal U^{*}$ are the same, and if $x_k$ is an orthonormal eigenfunction of $\mathcal U^{*}\mathcal U$, then
	\begin{align}\label{eiU}
		y_k = \frac{1}{\sqrt{\lambda_{k}}} \mathcal U x_k
	\end{align}
	are  orthonormal eigenfunctions of $ \mathcal U\mathcal U^{*}$.
\end{lemma}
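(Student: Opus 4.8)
The plan is to exploit the intertwining identity $\mathcal{U}(\mathcal{U}^*\mathcal{U}) = (\mathcal{U}\mathcal{U}^*)\mathcal{U}$, which transports eigenpairs back and forth between the two operators. First I would observe that both $\mathcal{U}^*\mathcal{U}: X \to X$ and $\mathcal{U}\mathcal{U}^*: Y \to Y$ are self-adjoint, compact, and positive semidefinite; in particular the spectral theorem for compact self-adjoint operators guarantees that each has a discrete set of real nonnegative eigenvalues accumulating only at $0$, together with an orthonormal eigenbasis. The central computation I would carry out at the outset is that for any $x \in X$,
\begin{align*}
	\|\mathcal{U}x\|_Y^2 = (\mathcal{U}x, \mathcal{U}x)_Y = (\mathcal{U}^*\mathcal{U}x, x)_X,
\end{align*}
which immediately shows $\mathcal{U}^*\mathcal{U}$ is positive semidefinite and, when applied to an eigenvector, pins down the norm of its image.

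Next I would establish that the nonzero eigenvalues coincide. Suppose $\lambda > 0$ is an eigenvalue of $\mathcal{U}^*\mathcal{U}$ with eigenvector $x$, so $\mathcal{U}^*\mathcal{U}x = \lambda x$. Applying $\mathcal{U}$ to both sides gives $\mathcal{U}\mathcal{U}^*(\mathcal{U}x) = \lambda(\mathcal{U}x)$, so $\mathcal{U}x$ would be an eigenvector of $\mathcal{U}\mathcal{U}^*$ for the same $\lambda$ \emph{provided} $\mathcal{U}x \neq 0$. This is exactly where positivity of $\lambda$ is needed: by the identity above, $\|\mathcal{U}x\|_Y^2 = (\lambda x, x)_X = \lambda\|x\|_X^2 > 0$, so $\mathcal{U}x \neq 0$ and $\lambda$ is indeed a nonzero eigenvalue of $\mathcal{U}\mathcal{U}^*$. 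Interchanging the roles of $\mathcal{U}$ and $\mathcal{U}^*$ (using $(\mathcal{U}^*)^* = \mathcal{U}$) yields the reverse inclusion, and the same argument applied eigenspace by eigenspace shows the multiplicities agree as well.

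Finally I would verify the stated formula for the eigenfunctions. If $x_k$ is an orthonormal eigenfunction of $\mathcal{U}^*\mathcal{U}$ with eigenvalue $\lambda_k > 0$, then $\|\mathcal{U}x_k\|_Y^2 = \lambda_k\|x_k\|_X^2 = \lambda_k$, so $y_k = \lambda_k^{-1/2}\mathcal{U}x_k$ has unit norm. For orthogonality, when $\lambda_j, \lambda_k > 0$ one computes
\begin{align*}
	(y_j, y_k)_Y = \frac{1}{\sqrt{\lambda_j\lambda_k}}(\mathcal{U}x_j, \mathcal{U}x_k)_Y = \frac{1}{\sqrt{\lambda_j\lambda_k}}(\mathcal{U}^*\mathcal{U}x_j, x_k)_X = \frac{\lambda_j}{\sqrt{\lambda_j\lambda_k}}(x_j, x_k)_X = \delta_{jk},
\end{align*}
and the eigenfunction property follows from $\mathcal{U}\mathcal{U}^* y_k = \lambda_k^{-1/2}\mathcal{U}(\mathcal{U}^*\mathcal{U}x_k) = \lambda_k^{-1/2}\mathcal{U}(\lambda_k x_k) = \lambda_k y_k$. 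I do not anticipate a serious obstacle here: the argument is essentially the abstract singular value decomposition, and the only point requiring care is the repeated use of $\lambda_k > 0$ to justify dividing by $\sqrt{\lambda_k}$ and to guarantee $\mathcal{U}x_k \neq 0$, which is precisely why the statement is restricted to the nonzero part of the spectrum.
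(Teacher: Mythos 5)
Your proof is correct, and in fact the paper contains nothing to compare it against: \Cref{Eig} is stated without proof or citation, treated as a known fact of spectral theory (it is the abstract singular value decomposition underlying the ``method of snapshots'' in POD, and is the companion to \Cref{Optimization}, for which the paper does cite a reference). Your argument --- the intertwining relation $\mathcal U(\mathcal U^{*}\mathcal U)=(\mathcal U\mathcal U^{*})\mathcal U$ combined with the identity $\|\mathcal U x\|_Y^2=(\mathcal U^{*}\mathcal U x,x)_X$ to guarantee $\mathcal U x\neq 0$ when $\lambda>0$ --- is exactly the standard one, and your verification of orthonormality and of the eigenfunction property of $y_k$ is complete; it supplies the justification the paper omits. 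One step you state loosely is the agreement of multiplicities (which, to be fair, the lemma does not even claim): to make it precise, note that for fixed $\lambda>0$ the map $x\mapsto \mathcal U x$ sends the $\lambda$-eigenspace of $\mathcal U^{*}\mathcal U$ into that of $\mathcal U\mathcal U^{*}$ and is injective there, since $\mathcal U x=0$ forces $\lambda x=\mathcal U^{*}\mathcal U x=0$; the symmetric statement for $\mathcal U^{*}$ then gives equality of the two dimensions.
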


\Cref{Eig} gives us an alternative way to compute eigenpair of $\mathcal R$: if we can not find eigenpair of $\mathcal R$  and it can be  rewritten as $\mathcal U\mathcal U^{*}$; then we should consider to compute the eigenpair  of $\mathcal U^{*}\mathcal U$.

Let us define the linear and bounded operator $\mathcal U: \mathbb R^\ell \to V_h$ by
\begin{align}\label{Ualpha}
	\mathcal U \alpha = \sum_{i=1}^\ell \alpha_i \mathfrak{u}_h^i,\qquad \textup{where}\; \alpha = [\alpha_1,\alpha_2,\ldots,\alpha_{\ell}]^\top\in \mathbb R^\ell.
\end{align}
The Hilbert adjoint $\mathcal U^*: V_h\to \mathbb R^\ell$ satisfies
\begin{align*}
	\left( \mathcal U^* v_h, \alpha\right)_{\mathbb R^\ell} = 	\left(v_h,  \mathcal U \alpha\right)_V = \sum_{i=1}^\ell \alpha_i (\mathfrak{u}_h^i, v_h)_V.
\end{align*}
This implies
\begin{align}\label{UStarvh}
	\mathcal U^* v_h = \left[(\mathfrak{u}_h^1, v_h)_V,\ldots, (\mathfrak{u}_h^\ell, v_h)_V \right]^\top.
\end{align}
Then we have 
\begin{align*}
	\mathcal U\mathcal U^*  v_h = \sum_{i=1}^\ell  (\mathfrak{u}_h^i, v_h)_V\mathfrak{u}_h^i = \mathcal R v_h.
\end{align*}

Motivated by \Cref{Eig}, we shall compute $\mathcal U^*	\mathcal U$, it is obvious that $\mathcal U^*\mathcal U: \mathbb R^\ell \to \mathbb R^\ell$, i.e., the operator $\mathcal U^*\mathcal U$ is a matrix, we can  compute its eigenvectors and eigenvalues in practice. Then we can get  eigenvectors   of 
$\mathcal R$ by using \Cref{Eig}. 
\subsubsection{Computation of $\mathcal U^*\mathcal U$} We first define the mass matrix $M$, the stiffness matrix $A$ and the load vector $b$ by
\begin{align}\label{MassStiffnessload}
	M_{i j}=(\varphi_{j}, \varphi_{i}), \quad     A_{i j}= a(\varphi_{j}, \varphi_{i}), \quad  \quad b_i = (f, \varphi_i).
\end{align}
Let $\texttt{u}_{i}\in \mathbb R^N$ be the coefficient of $\mathfrak{u}_h^i$ under the finite element basis $\{\varphi_j\}_{j=1}^N$, $1\le i\le \ell$,  i.e., 
\begin{align}\label{Coefficients}
	\mathfrak{u}_h^i = \sum_{j=1}^{N} \left(\texttt{u}_{i}\right)_j \varphi_j,
\end{align}
where $(\alpha)_j$ denotes the $j$-th component of the vector $\alpha$. Then substituting  \eqref{Coefficients} into \eqref{Generate_basis} we obtain
\begin{align}\label{ObtaiNp}
	A\texttt{u}_i = M\texttt{u}_{i-1},\qquad 2\le i\le \ell, \quad \textup{with} \quad A\texttt{u}_{1} = b.
\end{align}
Next, we define matrix $U_\ell$ by
\begin{align}\label{Coefficients_Matrix}
	U_\ell = [\texttt{u}_{1} \mid \texttt{u}_{2} \mid \ldots\mid \texttt{u}_{\ell}]\in \mathbb R^{N\times \ell}.
\end{align}

By \eqref{Ualpha} and \eqref{UStarvh} we can get 
\begin{align*}
	\mathcal U^*\mathcal U  \alpha  &=  \left[\left(\mathfrak{u}_h^1, \sum_{i=1}^\ell \alpha_i \mathfrak{u}_h^i\right)_V,\ldots, \left(\mathfrak{u}_h^\ell, \sum_{i=1}^\ell \alpha_i \mathfrak{u}_h^i\right)_V \right]^\top := K_\ell\alpha,
\end{align*}
where 
\begin{align}\label{K_matrix}
	\begin{split}
		K_\ell  &= \left[\begin{array}{cccc}
			(\mathfrak{u}_h^1, \mathfrak{u}_h^1)_V&	(\mathfrak{u}_h^1, \mathfrak{u}_h^2)_V&\cdots&	(\mathfrak{u}_h^1, \mathfrak{u}_h^\ell)_V\\[0.2cm]
			(\mathfrak{u}_h^2, \mathfrak{u}_h^1)_V&	(\mathfrak{u}_h^2, \mathfrak{u}_h^2)_V&\cdots&	(\mathfrak{u}_h^2, \mathfrak{u}_h^\ell)_V\\
			\vdots&\vdots&\cdots&\vdots\\
			(\mathfrak{u}_h^\ell, \mathfrak{u}_h^1)_V&	(\mathfrak{u}_h^\ell, \mathfrak{u}_h^2)_V&\cdots&	(\mathfrak{u}_h^\ell, \mathfrak{u}_h^\ell)_V\\
		\end{array}\right] 
		= \left[\begin{array}{cccc}
			\texttt{u}_{1}^\top A \texttt{u}_{1}&	\texttt{u}_{1}^\top A \texttt{u}_{2}&\cdots&	\texttt{u}_{1}^\top A \texttt{u}_{\ell} \\[0.2cm]
			\texttt{u}_{2}^\top A \texttt{u}_{1}&	\texttt{u}_{2}^\top A \texttt{u}_{2}&\cdots&	\texttt{u}_{2}^\top A \texttt{u}_{\ell}\\
			\vdots&\vdots&\cdots&\vdots\\
			\texttt{u}_\ell^\top A \texttt{u}_{1}&	\texttt{u}_\ell^\top A \texttt{u}_{2}&\cdots&	\texttt{u}_\ell^\top A \texttt{u}_\ell\\
		\end{array}\right] \\[0.3em]
		&= U_\ell^\top A U_\ell.
	\end{split}
\end{align}

\subsubsection{Finding the eigenpairs of $\mathcal R$}
Next, we assume that $\psi_k$ is the $k$-th orthonormal eigenvector of $K_\ell$ corresponding to the eigenvalue of $ \lambda_k(K_\ell)$, i.e.,  
\begin{align}\label{K_matrix_ei}
	K_\ell \psi_k  = \lambda_k(K_\ell) \psi_k, \quad \textup{and}\quad \psi_k^\top \psi_k=1.
\end{align}

According to \Cref{Eig}, the  nonzero eigenvalues of $K_\ell$ and $\mathcal R$ are the same.
\begin{lemma}\label{R_and_KEll}
	Assume that $\lambda_1(K_\ell)\ge \lambda_2(K_\ell)\ge\ldots\ge \lambda_\ell(K_\ell)>0$ are the eigenvalues of $K_\ell$, then
	\begin{align*}
		\lambda_i(K_\ell) = \lambda_i(\mathcal R), \quad \textup{for}\; i=1,2,\ldots, \ell.
	\end{align*}
\end{lemma}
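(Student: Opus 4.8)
The plan is to apply \Cref{Eig} directly to the operator $\mathcal U$ defined in \eqref{Ualpha}, with $X=\mathbb R^\ell$ and $Y=V_h$. First I would observe that both $\mathbb R^\ell$ and $V_h$ are finite dimensional, so $\mathcal U$ is automatically bounded and compact; thus the hypotheses of \Cref{Eig} are met. The two key identities have already been established in the excerpt: the computation leading to \eqref{K_matrix} shows that $\mathcal U^*\mathcal U=K_\ell$, while the displayed calculation preceding \eqref{MassStiffnessload} shows that $\mathcal U\mathcal U^*=\mathcal R$.

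With these identifications in hand, \Cref{Eig} immediately tells us that the nonzero (positive) eigenvalues of $K_\ell$ and of $\mathcal R$ coincide as multisets, counting multiplicity. It therefore remains only to match the ordering. Since the assumption of the lemma gives $\lambda_1(K_\ell)\ge\cdots\ge\lambda_\ell(K_\ell)>0$, the matrix $K_\ell$ possesses exactly $\ell$ strictly positive eigenvalues. On the other hand, $\mathcal R=\mathcal U\mathcal U^*$ factors through $\mathbb R^\ell$, so its range lies in $\operatorname{span}\{\mathfrak u_h^1,\ldots,\mathfrak u_h^\ell\}$ and hence $\operatorname{rank}(\mathcal R)\le\ell$; as a self-adjoint positive semidefinite operator on the $N$-dimensional space $V_h$, it has at most $\ell$ nonzero eigenvalues.

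Combining these facts, $\mathcal R$ has exactly $\ell$ nonzero eigenvalues, and they are precisely $\lambda_1(K_\ell),\ldots,\lambda_\ell(K_\ell)$; the remaining $N-\ell$ eigenvalues of $\mathcal R$ vanish. Listing both sets of eigenvalues in decreasing order, the positive ones occupy the first $\ell$ slots on each side and align index by index, which yields $\lambda_i(K_\ell)=\lambda_i(\mathcal R)$ for $i=1,\ldots,\ell$. I do not anticipate a genuine obstacle here, since the statement is essentially a corollary of \Cref{Eig}; the only point requiring a little care is confirming that $\mathcal R$ has no positive eigenvalue beyond the $\ell$ supplied by $K_\ell$, which follows from the rank bound above.
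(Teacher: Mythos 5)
Your proof is correct and follows essentially the same route as the paper, which treats the lemma as an immediate consequence of \Cref{Eig} together with the identifications $\mathcal U^*\mathcal U=K_\ell$ and $\mathcal U\mathcal U^*=\mathcal R$ established in the surrounding text. Your only addition is the rank-counting argument that aligns the two decreasing orderings (at most $\ell$ nonzero eigenvalues of $\mathcal R$ by the factorization through $\mathbb R^\ell$), a detail the paper leaves implicit.
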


Then, by \Cref{Eig} we can compute the basis $\widetilde \varphi_k $, $k=1,2,\ldots, \ell$, 
\begin{align*}
	\widetilde \varphi_k = \frac{1}{\sqrt{\lambda_k(K_\ell)}} \mathcal U \psi_k 
	= \frac{1}{\sqrt{\lambda_k(K_\ell)}} \sum_{i=1}^\ell  \left((\psi_{k})_i \sum_{j=1}^{N}(\texttt{u}_{i})_j \varphi_j \right)
	= \frac{1}{\sqrt{\lambda_k(K_\ell)}} \sum_{j=1}^{N} (U_\ell\psi_k)_j \varphi_j.
\end{align*}
In other words, the basis function $\widetilde \varphi_k$ is a finite element function, and  its coefficients are 
$$
\frac{1}{\sqrt{\lambda_k(K_\ell)}}  U_\ell \psi_k.
$$

In practice, truncation is performed when the eigenvalue is small. Assume the first $r$ eigenvalues satisfy our requirement, we then define a matrix $Q$ by 
\begin{align}\label{D_matrix}
	Q = U_\ell [\psi_1\mid \psi_2\mid \ldots| \psi_{r}]\begin{bmatrix}	\frac{1}{\sqrt{\lambda_1(K_\ell)}}  & & \\ & \ddots & \\ & & 	\frac{1}{\sqrt{\lambda_r(K_\ell)}} \end{bmatrix} \in \mathbb R^{N\times r}.
\end{align}

It is obvious that the $i$-th column of $Q$ is the coefficient of the finite element function $\widetilde \varphi_i$, $i=1,2,\ldots, r$. Next, we assume that $\Psi$ and $\Lambda$ are the eigenpair of $K_\ell$, i.e., $K_\ell \Psi = \Psi \Lambda$. For notational convenience, we adopt Matlab notation herein.  We use $\Psi(:, 1:r)$ to denote the first $r$ columns of $\Psi$, and $\Lambda(1:r,1:r)$ be the $r-$th leading principal minor of $\Lambda$. We summarize the above discussion in \Cref{Brrief} (see lines 1 -- 9).

\subsection{Discussion of matrix $K_\ell$ and its eigenvalues}
The main computational cost  in \Cref{Brrief} is to solve $\ell$ linear systems. However, it is unclear how to choose $\ell$. Hence we  have to choose $\ell$  large, this makes the computation expensive. In this section, we remedy the  \Cref{Brrief} such that we do not have any unnecessary cost.

Recall that $M$ and $A$ are the mass and  stiffness matrix; see the definition in \eqref{MassStiffnessload}. Define  $\widehat A = M^{-1}A$ and $\widehat b = M^{-1}b$, and   by \eqref{Coefficients_Matrix} we have 
\begin{align}\label{U_ell}
	U_\ell= [\widehat A^{-1} \widehat b\mid \widehat A^{-2} \widehat b \mid \ldots\mid \widehat A^{-\ell} \widehat b].
\end{align}
The matrix $U_\ell$ is nothing but the so-called Krylov matrix. Let $r\le \ell$ be the rank of $U_\ell$, then
\begin{align*}
	\textup{span}\{\widehat A^{-1} \widehat b,  \widehat A^{-2} \widehat b,  \ldots, \widehat A^{-\ell} \widehat b\} = \textup{span}\{\widehat A^{-1} \widehat b, \widehat A^{-2} \widehat b, \ldots, \widehat A^{-r} \widehat b\}.
\end{align*}

In other words, there is no need to compute $\widehat A^{-(r+1)} \widehat b$, $\ldots$, $\widehat A^{-\ell}\widehat b$, i.e., we only need to solve $r$ linear systems in \eqref{Generate_basis}. By the definition of $K_\ell$ in \eqref{K_matrix}, we see that the matrix $K_r$ is positive definite and $K_{r+1}$ is  positive semi-definite. Hence, we only need to compute the minimal eigenvalue of the matrices $K_1, K_2,\ldots$. Once the minimal eigenvalue of some matrix is zero, we stop. 

In practice, we terminate the process if the minimal eigenvalue  is small. 

Next, we show that the matrix $K_r$ is  Hankel type matrix, i.e.,  each ascending skew-diagonal from left to right is constant. 
\begin{lemma}
	For any $1\le i\le \ell$, $1\le j\le \ell$, we have
	\begin{align*}
		K_{1, i+j-1} = K_{2, i+j-2} = \ldots = K_{i, j} = K_{i+1, j-1} = \ldots =  K_{i+j-1, 1}.
	\end{align*}
	Hence, $K$ is Hankel type matrix.
\end{lemma}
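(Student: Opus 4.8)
The plan is to prove the Hankel property by establishing the single-step identity $K_{i,j}=K_{i+1,j-1}$ for all admissible index pairs, since the full chain $K_{1,i+j-1}=K_{2,i+j-2}=\cdots=K_{i+j-1,1}$ then follows by transitively composing such one-step shifts along the fixed anti-diagonal $\{(p,q):p+q=i+j\}$. In other words, constancy of $K$ on each anti-diagonal is equivalent to invariance under the elementary move $(i,j)\mapsto(i+1,j-1)$, and that move is what I would verify directly.

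First I would work entirely at the matrix level, using the representation $K_{i,j}=\texttt{u}_{i}^\top A\texttt{u}_{j}$ from \eqref{K_matrix} together with the recursion \eqref{ObtaiNp}, namely $A\texttt{u}_{j}=M\texttt{u}_{j-1}$ for $2\le j\le\ell$, and the symmetry of the mass and stiffness matrices (immediate from \eqref{MassStiffnessload}, since $a$ and the $H$-inner product are symmetric, so $M=M^\top$ and $A=A^\top$). Substituting the recursion into the second factor gives $K_{i,j}=\texttt{u}_{i}^\top A\texttt{u}_{j}=\texttt{u}_{i}^\top M\texttt{u}_{j-1}$.

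Next I would compute the neighbor $K_{i+1,j-1}=\texttt{u}_{i+1}^\top A\texttt{u}_{j-1}$. Using $A=A^\top$ and then applying the recursion to the first factor, $\texttt{u}_{i+1}^\top A\texttt{u}_{j-1}=(A\texttt{u}_{i+1})^\top\texttt{u}_{j-1}=(M\texttt{u}_{i})^\top\texttt{u}_{j-1}=\texttt{u}_{i}^\top M\texttt{u}_{j-1}$, where the last step uses $M=M^\top$. Comparing with the previous display yields $K_{i,j}=K_{i+1,j-1}$: raising one index and lowering its neighbor preserves the common value $\texttt{u}_{i}^\top M\texttt{u}_{j-1}$, and iterating identifies all entries on a given anti-diagonal, which is exactly the Hankel structure.

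Finally I would verify the index ranges so that every recursion invocation is legitimate. The shift uses $A\texttt{u}_{i+1}=M\texttt{u}_{i}$ (valid for $i\ge1$) and $A\texttt{u}_{j}=M\texttt{u}_{j-1}$ (valid for $j\ge2$); along the chain the entry indices stay in $\{1,\dots,i+j-1\}\subseteq\{1,\dots,\ell\}$, so one always has $j\ge2$ at the step where the shift is applied and the out-of-range vector $\texttt{u}_{0}$ is never needed. There is essentially no analytic obstacle here; the only care required is this boundary bookkeeping and keeping track of where each symmetry is used. As a remark, the same computation reads transparently in weak form: since $(\cdot,\cdot)_V=a(\cdot,\cdot)$ and the generating identity $a(\mathfrak{u}_h^{j},v_h)=(\mathfrak{u}_h^{j-1},v_h)$ holds for all $v_h\in V_h$, one obtains $K_{i,j}=a(\mathfrak{u}_h^{i},\mathfrak{u}_h^{j})=(\mathfrak{u}_h^{i},\mathfrak{u}_h^{j-1})=a(\mathfrak{u}_h^{i+1},\mathfrak{u}_h^{j-1})=K_{i+1,j-1}$, each step converting a $V$-product into an $H$-product and back via symmetry.
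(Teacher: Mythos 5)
Your proof is correct and takes essentially the same approach as the paper: both reduce the Hankel property to the one-step anti-diagonal shift $K_{i,j}=K_{i+1,j-1}$, obtained from the generating recursion together with symmetry, and then iterate along the anti-diagonal. The paper phrases the shift in weak/operator form via the self-adjoint operator $\mathcal{A}_h$ and $\mathfrak{u}_h^i=\mathcal{A}_h^{-1}\mathfrak{u}_h^{i-1}$, which is precisely the computation in your closing remark; your matrix-level argument with $A\texttt{u}_{j}=M\texttt{u}_{j-1}$ and $A=A^\top$, $M=M^\top$ is its exact algebraic counterpart.
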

\begin{proof}
	First, we define $\mathcal A_h: V_h \to V_h$ by
	\begin{align}\label{bilinear_Ah1}
		a(w_h, v_h) = (\mathcal A_h w_h, v_h)\quad \text { for all } v_{h} \in V_{h}.	
	\end{align}
	It is easy  to see that  $\mathcal A_h^{-1}$ exists and $\mathcal A_h$ is self-adjoint since the bilinear form $a(\cdot,\cdot)$ is symmetric. By \eqref{Generate_basis} we have
	\begin{align*}
		\mathfrak{u}_h^i = \mathcal A_h^{-1} \mathfrak{u}_h^{i-1} = \mathcal A_h^{-2} \mathfrak{u}_h^{i-2}  = \ldots = \mathcal A_h^{-i} \mathfrak{u}_h^{0}. 
	\end{align*}
	By the definition $K_{\ell}$ in \eqref{K_matrix} we have 
	\begin{align*}
		K_{ij} = a( \mathfrak{u}_h^j, \mathfrak{u}_h^i) = (\mathcal A_h \mathfrak{u}_h^j, \mathfrak{u}_h^i)= (\mathcal A_h \mathfrak{u}_h^j, \mathcal A_h ^{-1}\mathfrak{u}_h^{i-1})= (\mathfrak{u}_h^j, \mathfrak{u}_h^{i-1}) = (\mathcal A_h \mathfrak{u}_h^{j+1}, \mathfrak{u}_h^{i-1}) = K_{i-1, j+1}.
	\end{align*}
	By the same arguments we have
	\begin{align*}
		K_{1, i+j-1} = K_{2, i+j-2} = \ldots = K_{i, j} = K_{i+1, j-1} = \ldots =  K_{i+j-1, 1}.
	\end{align*}
	
\end{proof}

Therefore, to assemble the matrix $K_r$, we only need the matrix $K_{r-1}$ and to compute $\texttt{u}_{r-1}^\top A \texttt{u}_{r}$ and  $ \texttt{u}_{r}^\top A \texttt{u}_{r}$. 
Now we summarize the above discussion in \Cref{D_matrixalgorthm2}.
\begin{algorithm}[H]
	\caption{ (Get the matrix $Q$)}
	\label{D_matrixalgorthm2}
	{\bf{Input}:}  tol, $\ell$, $M$, $A$, $b$
	\begin{algorithmic}[1]
		
		\State Solve $A\texttt{u}_{1} = b$;
		\State Let $K_1 = \texttt{u}_{1}^\top A\texttt{u}_{1}$;
		\For{$i=2$ to $\ell$}
		
		\State Solve $A\texttt{u}_{i} = M \texttt{u}_{i-1}$;
		\State Get $\alpha = [K_{i-1}(i-1,2:i-1) \mid \texttt{u}_{i-1}^\top A \texttt{u}_{i}]$ and  $\beta = \texttt{u}_{i}^\top A \texttt{u}_{i}$;
		\State $K_i = \left[\begin{array}{cc}
			K_{i-1}&\alpha^\top\\
			\alpha&\beta\\
		\end{array}\right]$;
		\State $[\Psi, \Lambda] = \textup{eig}(K_i)$;
		\If{$\Lambda(i,i)\le \textup{tol}$}
		\State break;
		\EndIf	
		\EndFor
		\State Set $U = [\texttt{u}_{1} \mid \texttt{u}_{2} \mid \ldots\mid \texttt{u}_{i}]$;
		\State Set $Q = U\Psi(:,1:r-1)(\Lambda(1:i-1,1:i-1))^{-1/2}$;
		
		\State {\bf return} $Q$
	\end{algorithmic}
\end{algorithm}

Next, we show that the eigenvalues of the Hankel matrix $K_r$ have exponential decay.

\begin{theorem}\label{UAU}
	Let $\lambda_1 ({K_{r}})\ge \lambda_2({K_{r}})\ge \ldots\ge \lambda_r({K_{r}})> 0$ be the eigenvalues of $K_{r}$, then
	\begin{align}\label{all_eigs}
		\lambda_{2k+1} ({K_{r}}) \le 16\left[\exp \left(\frac{\pi^{2}}{4 \log (8\lfloor r / 2\rfloor / \pi)}\right)\right]^{-2k+2}\lambda_1 ({K_{r}}), \qquad 2k+1\le r.
	\end{align}
	Here $\lfloor\cdot\rfloor$ is the floor function  that takes as input a real number $x$, and gives as output the greatest integer less than or equal to $x$.  
	Moreover, the minimal eigenvalue of $K_r$ satisfies
	\begin{align}\label{last_eigs}
		\lambda_{\min} ({K_{r}})  \le C  (2r-1) \|f\|_{V'}^2 \exp\left(-\dfrac{7(r+1)}{2}\right).
	\end{align}
\end{theorem}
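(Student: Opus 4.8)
The plan is to exploit the representation of $K_r$ as a positive-definite \emph{moment} matrix and to read off the two estimates from the theory of Zolotarev numbers. Writing $B=\mathcal A_h^{-1}$, which is self-adjoint and positive definite on $V_h$ by the observation in the preceding lemma, and using $\mathfrak u_h^i=\mathcal A_h^{-i}f=B^i f$, I would first record that
\begin{align*}
	(K_r)_{ij}=a(\mathfrak u_h^j,\mathfrak u_h^i)=(f,B^{\,i+j-1}f)=\int_0^{L}x^{\,i+j-1}\,\mathrm d\nu(x),
\end{align*}
where $\{\mu_k\}$ are the eigenvalues of $B$ with $H$-orthonormal eigenvectors $\{e_k\}$, $L=\|B\|$, and $\nu=\sum_k (f,e_k)^2\,\delta_{\mu_k}$ is the spectral measure of $B$ weighted by $f$. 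This exhibits $K_r$ as the Hankel matrix of the shifted moments $c_m=(f,B^m f)=\int x^m\,\mathrm d\nu$, confirming both the Hankel structure and positive definiteness. Two scalars will be used below: the total mass $\int \mathrm d\nu=\|f\|^2$ and the first moment $\int x\,\mathrm d\nu=(f,Bf)=\|f\|_{V'}^2$, the latter because $Bf=\mathcal A_h^{-1}f$ is the $V$-Riesz representative of $f$.

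For \eqref{all_eigs} I would invoke the Beckermann--Townsend bound on singular values of matrices with displacement structure. A real Hankel matrix satisfies a Sylvester equation $S K_r-K_r S^{\top}=(\text{rank-}2\text{ term})$ with $S$ a shift, so $K_r$ has displacement rank $2$ with respect to a pair of normal matrices whose spectra lie on two disjoint real sets; the singular-value ratios are then governed by the corresponding Zolotarev numbers. For the symmetric real configuration relevant to a positive-definite Hankel matrix this is exactly their Hankel corollary, which produces the factor $16$, the base $\exp(\pi^2/(4\log(8\lfloor r/2\rfloor/\pi)))$, and the exponent $-2k+2$ appearing in \eqref{all_eigs}; the displacement rank $2$ is what yields the step of $2$ in the index $2k+1$. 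Since $K_r$ is symmetric positive definite, $\sigma_i(K_r)=\lambda_i(K_r)$, so the singular-value estimate is the eigenvalue estimate. This part is essentially a citation once the moment representation is in place.

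For \eqref{last_eigs} I would combine the variational characterization with an explicit extremal polynomial. Writing a unit vector $\psi$ as the coefficient vector of $q_\psi(x)=\sum_{i=1}^r\psi_i x^{i-1}$ and using $\mathcal U\psi=B\,q_\psi(B)f$, a direct computation gives
\begin{align*}
	\lambda_{\min}(K_r)=\min_{\|\psi\|_2=1}\int_0^L x\,q_\psi(x)^2\,\mathrm d\nu(x)\le \|f\|_{V'}^2\,\min_{\|\psi\|_2=1}\,\max_{x\in[0,L]}|q_\psi(x)|^2,
\end{align*}
where the inequality uses $\int x\,\mathrm d\nu=\|f\|_{V'}^2$. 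It then remains to control the scalar extremal problem $\min\{\|q\|_{\infty,[0,L]}:\deg q\le r-1,\ \|\mathrm{coeffs}(q)\|_2=1\}$. Choosing $q$ to be a shifted, coefficient-normalized Chebyshev polynomial of degree $r-1$ on $[0,L]$, whose $\ell^2$ coefficient norm grows geometrically, forces $\|q\|_\infty$ to be exponentially small; the associated conformal modulus gives the classical geometric rate $(\sqrt2-1)^{4(r+1)}$, and since $-4\log(\sqrt2-1)\approx 7/2$ this is the announced $\exp(-7(r+1)/2)$, while the algebraic prefactor $2r-1$ comes from the degree-dependent constant in the Chebyshev coefficient estimate.

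The main obstacle is the explicit constant in \eqref{last_eigs}. Part one is a black-box application of the Zolotarev/Beckermann--Townsend machinery, but to obtain the \emph{universal} rate $7/2$ in part two one must set up the extremal polynomial problem in a scale-invariant way, so that the unknown spectral width $L$ cancels against the normalization by $\|f\|_{V'}^2$, and then compute the exact conformal modulus of the relevant two-set configuration. Keeping precise track of the Chebyshev coefficient growth so as to land on $2r-1$ and on $(\sqrt2-1)^{4(r+1)}$, rather than a looser constant, is the delicate point.
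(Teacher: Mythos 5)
Your treatment of \eqref{all_eigs} is exactly the paper's: both observe that $K_r$ is a symmetric positive definite Hankel matrix and then quote the Beckermann--Townsend displacement/Zolotarev bound \cite[Corollary 5.5]{MR3947283}; your moment representation $(K_r)_{ij}=\int x^{i+j-1}\,\mathrm{d}\nu(x)$ is a clean equivalent of the paper's factorization $K_r=(V_dW)(V_dW)^\top$ and of its Hankel lemma. For \eqref{last_eigs} your skeleton also coincides with the paper's: a Rayleigh-quotient bound of the form $\lambda_{\min}(K_r)\le\bigl(\max_x|q(x)|\bigr)^2\int x\,\mathrm{d}\nu$, where $\int x\,\mathrm{d}\nu=\sum_k\mathfrak{b}_k^2\mu_k^2\le C\|f\|_{V'}^2$ is precisely \Cref{coeff}, followed by exhibiting one good coefficient-normalized polynomial $q$ of degree $r-1$. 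Where you genuinely differ is the choice of $q$: the paper takes the Legendre polynomial $\mathcal{L}_r$ on $[0,1]$, bounds its coefficient norm from below via the Hilbert-matrix identity $H_r^{-1}=\sum_k v_kv_k^\top$, and imports Wilkinson's estimate $\lambda_{\min}(H_r)\le C\exp(-7(r+1)/2)$ from \cite{MR0271762}; you take a coefficient-normalized shifted Chebyshev polynomial and use the coefficient growth $(1+\sqrt2)^{2(r-1)}$. Your variant is legitimate and somewhat more direct: it yields $\lambda_{\min}(K_r)\le C\,r\,(\sqrt2-1)^{4(r-1)}\|f\|_{V'}^2$, which matches \eqref{last_eigs} after absorbing constant factors into $C$, so your worry about landing exactly on the prefactor $2r-1$ and the shift $r+1$ is unnecessary.

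The genuine gap is your disposal of the spectral interval $[0,L]$. The claim that the extremal problem can be set up ``in a scale-invariant way, so that the unknown spectral width $L$ cancels against the normalization by $\|f\|_{V'}^2$'' is not tenable: the two factors in your bound are independent, with $\int x\,\mathrm{d}\nu\le C\|f\|_{V'}^2$ controlling the mass and $\min_{\|\psi\|_2=1}\max_{x\in[0,L]}|q_\psi(x)|$ controlling the polynomial, and the latter is emphatically not scale-invariant, because the $\ell^2$ normalization of coefficients does not commute with $x\mapsto Lx$. Quantitatively, the coefficient-normalized Chebyshev polynomial on $[0,L]$ has squared sup norm decaying at rate $2\log\rho(L)$ per unit degree, where $\rho(L)=1+2/L+\sqrt{(1+2/L)^2-1}$; this equals $4\log(1+\sqrt2)\approx 7/2$ exactly when $L=1$, drops below $7/2$ for every $L>1$, and tends to $0$ as $L\to\infty$. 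Since $L=1/\lambda_{1,h}$ need not be at most $1$, your argument as written cannot produce the universal rate $7/2$. The paper resolves this by fiat at the start of its proof: it assumes, without loss of generality ``by shifting'' the eigenvalue problem \eqref{eigfunctions}, that $\lambda_{1,h}\ge1$, so that all nodes $\mu_k$ lie in $(0,1]$ and the $[0,1]$-extremal problem applies. (This normalization is the soft spot of the paper's own proof as well, but it is stated there as an explicit assumption rather than a claimed cancellation.) If you insert the same normalization, your Chebyshev argument goes through and constitutes a valid, slightly more elementary alternative to the paper's Legendre/Hilbert-matrix route.
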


The proof of \Cref{UAU}  and the error analysis in \Cref{Error_analysis} relies on the discrete eigenvalue problem of  the bilinear form $a(\cdot, \cdot)$.  Let $(\lambda_{ih}, \phi_{ih})\in \mathbb R\times V_h$ be the solution of 
\begin{align}\label{eigfunctions}
	a \left(\phi_{ih},  v_h\right)=\lambda_{ih}\left(\phi_{ih}, v_h\right) \quad \text { for all } v_h \in V_h.	
\end{align}

It is well known that the eigenvalue problem \eqref{eigfunctions} has a finite sequence of eigenvalues and  eigenfunctions
\begin{gather*}
	0<\lambda_{1,h} \leq \lambda_{2,h} \leq \ldots\le \lambda_{N,h}, \quad  \phi_{1,h},	\phi_{2,h},  \ldots,	\phi_{N,h},\quad (\phi_{i,h}, \phi_{j,h})_V=\delta_{ij}.
\end{gather*}

Let $\widehat \phi_i\in \mathbb R^N$ be the coefficient of $\phi_{i,h}$ in terms of  the finite element basis $\{\varphi_j\}_{j=1}^N$. Since $(\phi_{i,h}, \phi_{j,h})_V=\delta_{ij}$, then $
\widehat \phi_i^\top A\widehat \phi_j= \delta_{ij}$. 
This implies that $\{\widehat \phi_i\}_{i=1}^N$ is a $A$-orthnormal basis in $\mathbb R^N$. 

Let $\Pi: V\to V_h$ be the standard $L^2$ projection, i.e., for all $w\in L^2(\Omega)$ we have 
\begin{align}\label{L2Projection}
	(\Pi w, v_h) = (w, v_h),\quad \forall v_h\in V_h.
\end{align}
\begin{lemma}\label{coeff}
	Let  $\{\mathfrak{b}_j\}_{j=1}^N$ and $\{\mathfrak{f}_j\}_{j=1}^N$ be the coefficient of $\widehat b$ and $\Pi f$ in terms of  $\{\widehat \phi_i\}_{i=1}^N$ and $\{ \phi_{i,h}\}_{i=1}^N$, respectively. Then
	\begin{align*}
		\mathfrak{b}_j = \mathfrak{f}_j,\qquad j=1,2\ldots, N.
	\end{align*}
	Furthermore, 
	\begin{align*}
		\sum_{j=1}^N  \frac{\mathfrak b_j^2}{\lambda_{j,h}^2}  \le C \|f\|_{V'}^2.
	\end{align*}
\end{lemma}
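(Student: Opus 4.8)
The plan is to prove the two claims separately: first the algebraic identity $\mathfrak b_j = \mathfrak f_j$, then the weighted bound. For the first claim I would begin by identifying the finite element coefficient vector of $\Pi f$. Writing $\Pi f = \sum_{j=1}^N p_j \varphi_j$ and testing the defining relation \eqref{L2Projection} against $\varphi_i$ gives $(Mp)_i = (\Pi f, \varphi_i) = (f, \varphi_i) = b_i$, so that $p = M^{-1} b = \widehat b$; that is, $\widehat b$ is exactly the coefficient vector of $\Pi f$ in the nodal basis. Since $\widehat \phi_j$ is by definition the coefficient vector of $\phi_{j,h}$, the expansion $\Pi f = \sum_{j} \mathfrak f_j \phi_{j,h}$ translates, at the level of coefficient vectors, into $\widehat b = \sum_j \mathfrak f_j \widehat \phi_j$. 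Comparing this with the defining expansion $\widehat b = \sum_j \mathfrak b_j \widehat \phi_j$ and using that $\{\widehat \phi_j\}_{j=1}^N$ is a basis of $\mathbb R^N$ (indeed $A$-orthonormal) yields $\mathfrak b_j = \mathfrak f_j$ for every $j$.

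For the weighted sum, the key is an explicit formula for these coefficients. Using the $V$-orthonormality $(\phi_{i,h}, \phi_{j,h})_V = \delta_{ij}$, I would extract $\mathfrak f_j = (\Pi f, \phi_{j,h})_V = a(\Pi f, \phi_{j,h})$. Substituting $v_h = \Pi f$ into the discrete eigenvalue relation \eqref{eigfunctions} gives $a(\phi_{j,h}, \Pi f) = \lambda_{j,h}(\phi_{j,h}, \Pi f)$, and since $\phi_{j,h} \in V_h$ the projection property \eqref{L2Projection} replaces $(\phi_{j,h}, \Pi f)$ by $(f, \phi_{j,h})$. Hence $\mathfrak b_j = \mathfrak f_j = \lambda_{j,h}(f, \phi_{j,h})$, so that $\mathfrak b_j^2/\lambda_{j,h}^2 = (f, \phi_{j,h})^2$, and the sum in question collapses to $\sum_{j=1}^N (f, \phi_{j,h})^2$.

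It then remains to bound $\sum_{j} (f, \phi_{j,h})^2$ by $\|f\|_{V'}^2$, which is the crux of the argument. I would introduce the Riesz representative $w_h \in V_h$ defined by $a(w_h, v_h) = (f, v_h)$ for all $v_h \in V_h$. Expanding $w_h$ in the $V$-orthonormal eigenbasis shows its coefficients are precisely $(f, \phi_{j,h})$, whence $\|w_h\|_V^2 = a(w_h, w_h) = \sum_{j} (f, \phi_{j,h})^2$. Testing the definition of $w_h$ with $v_h = w_h$ and bounding the right-hand side by the duality pairing, $a(w_h, w_h) = (f, w_h) \le \|f\|_{V'}\|w_h\|_V$, gives $\|w_h\|_V \le \|f\|_{V'}$, and combining the two displays yields $\sum_{j} (f, \phi_{j,h})^2 \le \|f\|_{V'}^2$ (so one may even take $C = 1$). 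The only real obstacle is recognizing that the weighting by $\lambda_{j,h}^{-2}$ is exactly what converts the squared eigen-coefficients of $\widehat b$ into the squared $V$-norm of a Riesz representative whose norm is controlled by $\|f\|_{V'}$; once that identification is made, each step is a short computation. A minor point to verify is that $(f, v_h)$ coincides with $\langle f, v_h\rangle_{V', V}$ so that the duality estimate applies, which holds because $f \in H \hookrightarrow V'$.
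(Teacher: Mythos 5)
Your proof is correct and is essentially the paper's argument: your Riesz representative $w_h$ is exactly the paper's $\mathcal A_h^{-1}\Pi f$, and both proofs identify $\sum_j \mathfrak b_j^2/\lambda_{j,h}^2$ with $\|w_h\|_V^2$ and bound it by $\|f\|_{V'}^2$ via the duality pairing, after establishing $\mathfrak b_j=\mathfrak f_j$ through the correspondence between the $A$-orthonormal coefficient vectors $\widehat\phi_j$ and the $V$-orthonormal eigenfunctions $\phi_{j,h}$. The only (welcome) additions on your side are the explicit verification that $\widehat b=M^{-1}b$ is the nodal coefficient vector of $\Pi f$, which the paper merely asserts, and the observation that one may take $C=1$.
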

\begin{proof}
	First, we take $w_h = \mathcal A_h^{-1} \Pi f$ in \eqref{bilinear_Ah1} to obtain:
	\begin{align*}
		a(\mathcal A_h^{-1} \Pi f, v_h) = (\Pi f, v_h) = (f, v_h).
	\end{align*}
	By the continuity of the bilinear form $a(\cdot, \cdot)$ we have 
	\begin{align*}
		\|\mathcal A_h^{-1} \Pi f\|_V \le C \|f\|_{V'}.
	\end{align*}
	Since $\widehat b$ is the coefficient of $\Pi f$ in terms of the finite element basis $\{\varphi_i\}_{i=1}^N$, then 
	\begin{align*}
		\mathfrak{f}_j = (\Pi f, \phi_{j,h})_V = \left(\sum_{i=1}^N (\widehat b)_{\textcolor{blue}{i}}\varphi_i, \sum_{k=1}^N (\widehat \phi_j)_k \varphi_k\right)_V = \widehat \phi_j^\top A\widehat b = \widehat \phi_j^\top A \sum_{\textcolor{blue}{i}=1}^N \mathfrak{b}_{\textcolor{blue}{i}}\widehat \phi_{\textcolor{blue}{i}} = \mathfrak{b}_j.
	\end{align*}
	This implies
	\begin{align*}
		\Pi f = \sum_{j=1}^N  \mathfrak{b}_j\phi_{j,h}.
	\end{align*}
	Note that $(\phi_{i,h}, \phi_{j,h})_V = \delta_{ij}$, therefore we have 
	\begin{align*}
		\|\mathcal A_h^{-1} \Pi f\|_V = \left\|\mathcal A_h^{-1} \sum_{j=1}^N 	\mathfrak{b}_j \phi_{j,h}\right\|_V = \left\| \sum_{j=1}^N 	\frac{\mathfrak{b}_j}{\lambda_{j,h}} \phi_{j,h}\right\|_V = \left(\sum_{j=1}^N \frac{\mathfrak{b}_j^2}{\lambda_{j,h}^2}\right)^{1/2}.
	\end{align*}
	This implies, for any $N$, we have 
	\begin{align*}
		\sum_{j=1}^N \frac{\mathfrak{b}_j^2}{\lambda_{j,h}^2} \le C \|f\|_{V'}^2.
	\end{align*}
\end{proof}

Let $\mu_j = 1/\lambda_{j,h}$, we define the rectangular Vandermonde matrix $V_d$ and weighted matrix $W$ by
\begin{align*}
	V_d = 
	\left[\begin{array}{cccc}
		1&1&\cdots&1\\[0.2cm]
		\mu_1&\mu_2&\cdots&\mu_N\\
		\vdots&\vdots&\cdots&\vdots\\
		\mu_1^{r-1}&\mu_2^{r-1}&\cdots&\mu_N^{r-1}\\
	\end{array}\right], \qquad 		W = \left[\begin{array}{cccc}
		\mathfrak{b}_1\mu_1&0&\cdots&0\\[0.2cm]
		0&\mathfrak{b}_2\mu_2&\cdots&0\\
		\vdots&\vdots&\cdots&\vdots\\
		0&0&\cdots&\mathfrak{b}_N\mu_N\\
	\end{array}\right]. 
\end{align*}

\begin{lemma}
	The Hankel matrix $K_r$, which was defined in \eqref{K_matrix} can be rewritten as
	\begin{align*}
		K_r=  (V_dW) (V_dW)^\top.
	\end{align*}
\end{lemma}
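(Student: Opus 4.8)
The plan is to diagonalize $\widehat A = M^{-1}A$ in the $A$-orthonormal eigenbasis $\{\widehat\phi_j\}_{j=1}^N$ and to read off the entries of $K_r$ as a weighted power sum in the reciprocal eigenvalues $\mu_j = 1/\lambda_{j,h}$; that power sum is then recognized, by a direct entrywise expansion, as the Gram matrix $(V_dW)(V_dW)^\top$.

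First I would recast the generating recursion \eqref{ObtaiNp} in closed form. As noted after \eqref{U_ell}, the columns of $U_\ell$ satisfy $\texttt{u}_i = \widehat A^{-i}\widehat b$. At the matrix level, the discrete eigenproblem \eqref{eigfunctions} reads $A\widehat\phi_j = \lambda_{j,h} M\widehat\phi_j$, that is $\widehat A\widehat\phi_j = \lambda_{j,h}\widehat\phi_j$, so each coefficient vector $\widehat\phi_j$ is an eigenvector of $\widehat A$ with eigenvalue $\lambda_{j,h}$, and hence $\widehat A^{-i}\widehat\phi_j = \mu_j^{\,i}\widehat\phi_j$. Expanding $\widehat b = \sum_{j}\mathfrak{b}_j\widehat\phi_j$ (the defining property of the coefficients $\mathfrak{b}_j$ in \Cref{coeff}), I obtain the key expansion
\begin{align*}
\texttt{u}_i = \widehat A^{-i}\widehat b = \sum_{j=1}^N \mathfrak{b}_j\,\mu_j^{\,i}\,\widehat\phi_j .
\end{align*}

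Next I would compute a generic entry of $K_r$ using the $A$-orthonormality $\widehat\phi_p^\top A\widehat\phi_q = \delta_{pq}$. From \eqref{K_matrix} we have $K_{pq} = \texttt{u}_p^\top A\texttt{u}_q$, and substituting the expansion above collapses all cross terms:
\begin{align*}
K_{pq} = \sum_{i,j}\mathfrak{b}_i\,\mu_i^{\,p}\,\mathfrak{b}_j\,\mu_j^{\,q}\,\widehat\phi_i^\top A\widehat\phi_j = \sum_{j=1}^N \mathfrak{b}_j^{\,2}\,\mu_j^{\,p+q}.
\end{align*}
In particular $K_{pq}$ depends only on $p+q$, which independently re-derives the Hankel structure. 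Finally I would expand the claimed factorization entrywise: row $p$ of $V_d$ carries the power $\mu_j^{\,p-1}$ and the diagonal weight contributes $W_{jj}=\mathfrak{b}_j\mu_j$, so $(V_dW)_{pj}=\mu_j^{\,p-1}\cdot\mathfrak{b}_j\mu_j=\mathfrak{b}_j\mu_j^{\,p}$, whence
\begin{align*}
\big((V_dW)(V_dW)^\top\big)_{pq} = \sum_{j=1}^N (V_dW)_{pj}(V_dW)_{qj} = \sum_{j=1}^N \mathfrak{b}_j^{\,2}\,\mu_j^{\,p+q} = K_{pq},
\end{align*}
which establishes $K_r = (V_dW)(V_dW)^\top$.

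The computation is essentially bookkeeping, so the only care required is matching index conventions: the Vandermonde rows of $V_d$ carry powers $0,\dots,r-1$, whereas $\texttt{u}_p=\widehat A^{-p}\widehat b$ carries power $p$, and the extra factor $\mu_j$ supplied by the diagonal weight $W$ is precisely what reconciles the two, so that the exponent in the weighted power sum reads $p+q$ rather than $p+q-2$. I would present the eigen-expansion of $\texttt{u}_i$ as the one substantive step; everything after it is a direct entrywise expansion together with the $A$-orthonormality relation.
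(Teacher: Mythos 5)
Your proposal is correct and follows essentially the same route as the paper: both hinge on the eigen-expansion $\texttt{u}_i = \widehat A^{-i}\widehat b = \sum_{j}\mathfrak{b}_j\mu_j^{\,i}\widehat\phi_j$ together with the $A$-orthonormality $\widehat\phi_p^\top A\widehat\phi_q=\delta_{pq}$; the paper merely packages this at the matrix level, writing $U_r=[\widehat\phi_1\mid\cdots\mid\widehat\phi_N]\,W V_d^\top$ and collapsing $K_r=U_r^\top A U_r$ in one step, whereas you verify the identical cancellation entrywise. The only difference is bookkeeping style, and your index check ($\mu_j^{\,p-1}$ from $V_d$ times $\mu_j$ from $W$ giving exponent $p+q$) is exactly the reconciliation implicit in the paper's factorization.
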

\begin{proof}
	Recall \eqref{eigfunctions} and note that $\widehat \phi_i\in \mathbb R^N$ is the coefficient of $\phi_{i,h}$ in terms of  the finite element basis $\{\varphi_j\}_{j=1}^N$, then
	\begin{align*}
		A \widehat \phi_i = \lambda_{i,h} M \widehat \phi_i. 
	\end{align*}
	This gives
	\begin{align}\label{RN_eig}
		\widehat A \widehat \phi_i  = \lambda_{i,h}  \widehat \phi_i.
	\end{align}
	Then, by \eqref{U_ell} and \eqref{RN_eig} we have 
	\begin{align*}
		U_r&= [\widehat A^{-1} \widehat b\mid \widehat A^{-2} \widehat b \mid \ldots\mid \widehat A^{-r} \widehat b]\\
		& = \left[\sum_{j=1}^N \mathfrak{b}_j\mu_j\widehat \phi_j ~\bigg|~ \sum_{j=1}^N {\mathfrak{b}_j}\mu_j^2\widehat \phi_j  ~\bigg|~ \ldots\ldots  ~\bigg|~  \sum_{j=1}^N {\mathfrak{b}_j}\mu_j^r\widehat \phi_j\right]\\
		& =[\widehat \phi_1\mid \widehat \phi_2 \mid \ldots\mid \widehat \phi_N] WV_d^\top.
	\end{align*}
	Therefore, by  \eqref{K_matrix}  we have 
	\begin{align*}
		K_r= U_r^\top A U_r  = (WV_d^\top)^\top \underbrace{[\widehat \phi_1\mid \widehat \phi_2 \mid \ldots\mid \widehat \phi_N]^\top A [\widehat \phi_1\mid \widehat \phi_2 \mid \ldots\mid \widehat \phi_N]}_{I} WV_d = (V_dW) (V_dW)^\top.
	\end{align*}
\end{proof}

\begin{proof}[Proof of \Cref{UAU}]
	First, the estimate \eqref{all_eigs} holds since $K_r$ is a positive definite Hankel matrix; see \cite[Corollary 5.5]{MR3947283}. Next, we prove \eqref{last_eigs} and we use the techniques in \cite{MR1273642,MR909407}.  
	
	Without loss of generality, we assume that the minimal  eigenvalue of \eqref{eigfunctions} $\lambda_{1,h}$ is  no less   than $1$; otherwise we modify the eigenvalue problem \eqref{eigfunctions} by shifting.
	
	Let $\mathcal L_k(x) = \displaystyle\sum_{j=1}^k \ell_{kj} x^{j-1}$ be the $k$-th Legendre polynomial in $[0,1]$, such that
	\begin{align*}
		\int_0^{1} \mathcal L_i(x)\mathcal L_j(x) {\rm d}x = \delta_{ij} \quad \textup{and}\quad \max_{x\in [0,1]} |\mathcal L_k(x)| = \sqrt{{2k-1}}.
	\end{align*}
	We define the vector $v_k= [\ell_{k1},\cdots \ell_{kk},0,\cdots,0 ]^\top \in \mathbb{R}^r$ with $1\le k\le r$ and matrix $L$ by
	\begin{align*}
		L = \left[\begin{array}{cccc}
			\ell_{11}&0&0&0\\[0.2cm]
			\ell_{11}&\ell_{12}&\cdots&0\\
			\vdots&\vdots&\cdots&\vdots\\
			\ell_{r1}&\ell_{r2}&\cdots&\ell_{rr}\\
		\end{array}\right].
	\end{align*}
	We note that $L$ is non-singular; all entries from the main diagonal of $L$ are different from zero since the degree of $\mathcal L_k$ is exactly $k-1$. By the definition of Legendre polynomials we have 
	\begin{align*}
		\left[\begin{array}{cc}
			\mathcal L_1 \\[0.2cm]
			\mathcal L_2\\
			\vdots\\
			\mathcal L_r
		\end{array}\right]=	 \left[\begin{array}{cccc}
			\ell_{11}&0&0&0\\[0.2cm]
			\ell_{11}&\ell_{12}&\cdots&0\\
			\vdots&\vdots&\cdots&\vdots\\
			\ell_{r1}&\ell_{r2}&\cdots&\ell_{rr}\\
		\end{array}\right]\left[\begin{array}{cc}
			1  \\[0.2cm]
			x\\
			\vdots\\
			x^{r-1}
		\end{array}\right]. 
	\end{align*}
	Therefore,
	\begin{align*}
		\left[\begin{array}{cc}
			1  \\[0.2cm]
			x\\
			\vdots\\
			x^{r-1}
		\end{array}\right] =  \left[\begin{array}{cccc}
			\ell_{11}&0&0&0\\[0.2cm]
			\ell_{11}&\ell_{12}&\cdots&0\\
			\vdots&\vdots&\cdots&\vdots\\
			\ell_{r1}&\ell_{r2}&\cdots&\ell_{rr}\\
		\end{array}\right]^{-1} \left[\begin{array}{cc}
			\mathcal L_1 \\[0.2cm]
			\mathcal L_2\\
			\vdots\\
			\mathcal L_r
		\end{array}\right].
	\end{align*}
	Let $D$ be the inverse of $L$, then for $i=1,2,\ldots, r$ we have
	\begin{align}\label{inver}
		x^{i-1} = \sum_{j=1}^r d_{ij} \mathcal L_j(x).
	\end{align} 
	We multiply	$\mathcal L_j(x)$ on both sides of \eqref{inver}  and integrate on $[0, 1]$ to obtain:
	\begin{align}\label{dmatrix}
		\int_0^{1} x^{i-1} \mathcal L_j(x) {\rm d}x = d_{ij}.
	\end{align}	
	By virtue of orthogonality of Legendre polynomials, and  \eqref{inver} and \eqref{dmatrix}  imply 
	\begin{align*}
		\sum_{j=1}^k\ell_{kj} \int_0^1 x^{i-1} x^{j-1} {\rm d}x = d_{ij}.
	\end{align*}
	In other words,
	\begin{align}\label{Hil}
		H_r L^\top = L^{-1},
	\end{align}
	where $H_r$ is the $r\times r$ Hilbert matrix with $(H_r)_{ij}= 1/(i+j-1)$. It is well known that the condition number of $H_r$ grows like $(1+\sqrt 2)^{4r}/\sqrt{r}$; see \cite[Eq.(3.35)]{MR0271762}. Next, by~\eqref{Hil} we know
	\begin{align*}
		H_r^{-1} = \sum_{k=1}^r v_k v_k^\top.
	\end{align*}
	Now, let $B=\displaystyle\sum_{k=1}^{r-1} v_kv_k^\top$, and one can verify that $B$ has the block form $\begin{pmatrix}
		H_{r-1}^{-1} & 0 \\ 0 & 0
	\end{pmatrix}$. Thus $B$ has a zero eigenvalue and its other (positive) eigenvalues are the reciprocals of the eigenvalues of $H_{r-1}$. Moreover, since $H_r^{-1}-B= v_r v_r^\top$ is a rank one matrix, we have 
	\begin{align*}
		\lambda_{\max} (H_r^{-1}-B) = v_r^\top v_r.
	\end{align*}
	On the other hand, 
	\begin{align*}
		v_r^\top v_r &=	\lambda_{\max} (H_r^{-1}-B) \ge \lambda_{\max}(H_r^{-1}) - \lambda_{\max}(B)
		= \frac{1}{\lambda_{\min} (H_r)} - \frac{1}{\lambda_{\min} (H_{r-1})}
		= \frac{1-c_r}{\lambda_{\min} (H_r)},
	\end{align*}
	where $c_r= \frac{\lambda_{\min} (H_r)}{\lambda_{\min} (H_{r-1})}$. We remark that for $r\ge 2$ one has $(1-c_r)^{-1}<6/5$. Note that
	\begin{align*}
		(V_dW)^\top v_r = \left[\begin{array}{cccc}
			\mathfrak b_1\mu_1 &\mathfrak b_1\mu_1^2&\cdots&\mathfrak b_1\mu_1^{r}\\[0.2cm]
			\mathfrak b_2\mu_2& \mathfrak b_2\mu_2^2&\cdots& \mathfrak b_2\mu_2^{r}\\
			\vdots&\vdots&\cdots&\vdots\\
			\mathfrak b_N \mu_N& \mathfrak b_N\mu_N^2 &\cdots&\mathfrak b_N\mu_N^{r}\\
		\end{array}\right] \left[\begin{array}{cccc}
			\ell_{r1}\\
			\ell_{r2}\\
			\vdots\\
			\ell_{rr}\\
		\end{array}\right] = \left[\begin{array}{cccc}
			\mathfrak b_1\mu_1\mathcal L_r(\mu_1)\\
			\mathfrak b_2\mu_2\mathcal L_r(\mu_2)\\
			\vdots\\
			\mathfrak b_N\mu_N\mathcal L_r(\mu_N)\\
		\end{array}\right].
	\end{align*} 
	Therefore 
	\begin{align*}
		\lambda_{\min}(K_r) & \le \frac{v_r^\top K_r v_r}{v_r^\top v_r} = \frac{v_r^\top (V_dW) (V_dW)^\top v_r}{v_r^\top v_r}=  \frac{ ((V_dW)^\top v_r)^\top (V_dW)^\top v_r}{v_r^\top v_r}\\
		&= \dfrac{\displaystyle\sum_{k=1}^N \mathfrak b_k^2 \mu_k^2 \mathcal L_r^2(\mu_k)}{v_r^\top v_r}  \le \max_{x\in [0,1]}|\mathcal L_r(x)|^2 \frac{\lambda_{\min}(H_r) }{1-c_r} \sum_{k=1}^N \mathfrak b_k^2\mu_k^2\\
		&\le C(2r-1)\lambda_{\min}(H_r)\|f\|_{V'}^2.
	\end{align*}
	Recall \cite[Eq.(3.35)]{MR0271762} that $	\lambda_{\min} (H_r)\le C \exp(-7(r+1)/2)$. Then
	\begin{align*}
		\lambda_{\min}(K_r) \le C (2r-1) \|f\|_{V'}^2 \exp(-7(r+1)/2).
	\end{align*}
\end{proof}

\begin{example}\label{Example2}
	We use the same problem data as in the \Cref{Example0} and take $h=1/100$. We report  all the eigenvalues of $K_8$ in \Cref{2DEigs}. It is clear that the eigenvalues have exponential decay. This matches our theoretical result in \Cref{UAU}.
	\begin{figure}[tbh]
		\centerline{
			\hbox{\includegraphics[width=3.5in]{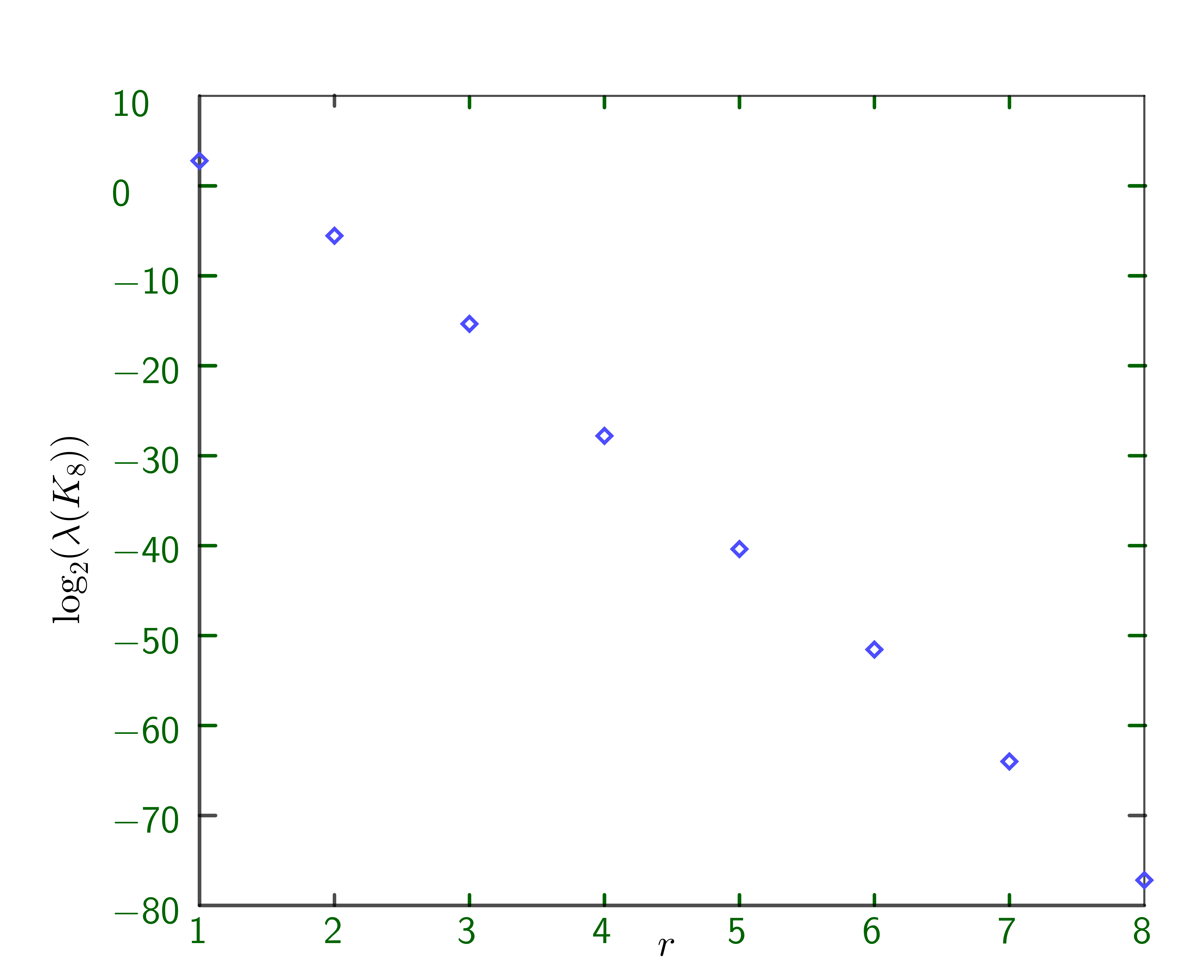}}}
		\caption{The eigenvalues of the Hankel matrix $K_8$.}
		\label{2DEigs}
		\centering
	\end{figure}
	
\end{example}

\subsection{Reduced order model (ROM)}
In the rest of the paper, we let $\widetilde \varphi_1, \widetilde\varphi_2,\ldots, \widetilde\varphi_{r}$ be the order $r$ optimal reduced basis, which was  generated in \Cref{D_matrixalgorthm2},  and we define $V_r$  by
\begin{align}\label{PODbasis0}
	V_r = \textup{span}\{\widetilde \varphi_1, \widetilde\varphi_2,\ldots, \widetilde\varphi_{r}\}.
\end{align}

Using the space $V_r$ we will construct the following  two-steps backward differentiation reduced-order model (BDF2-ROM) scheme. Find $u_r^n\in V_r$ with $u_r^0=0$ satisfying
\begin{align}\label{ROM}
	\left(\partial_t^+ u_r^n, v_r\right)+ a\left(u_r^n, v_r\right) = (f, v_r), \quad \forall v_r\in V_r.
\end{align}
Now, we are ready to compute the inner product in \eqref{ROM}. Let $q_j$ be the $j$-th column of $Q$, then
\begin{align*}
	A_{r, ij} = a(\widetilde \varphi_j, \widetilde \varphi_i) =  \left(\sum_{m=1}^{N} (q_{j})_m\varphi_m,  \sum_{n=1}^{N} (q_{i})_n\varphi_n\right)_V = q_j^\top A q_i,
\end{align*}
where $A$ is the stiffness matrix defined in \eqref{MassStiffnessload}. Hence,
\begin{align*}
	A_r = \left[\begin{array}{cccc}
		q_1^\top A q_1&	q_1^\top A q_2&\cdots&	q_1^\top A q_{r} \\[0.2cm]
		q_2^\top A q_1&	q_2^\top A q_2&\cdots&	q_2^\top A q_{r}\\
		\vdots&\vdots&\cdots&\vdots\\
		q_{r}^\top A q_1&	q_{r}^\top A q_2&\cdots&	q_{r}^\top A q_{r}\\
	\end{array}\right] = Q^\top A Q\in \mathbb R^{r\times r}.
\end{align*}
Similarly, we define the reduced mass matrix and load vector.
\begin{align}\label{reduced_matrices}
	M_r = Q^\top M Q\in  \mathbb R^{r\times r}, \quad  \quad b_r =  Q^\top   b \in \mathbb R^{r}.
\end{align}

Finally, we are ready  to complete the full implementation of \eqref{ROM}.
Since $u_r^n\in V_{r}$ holds, we make the Galerkin ansatz of the form
\begin{align}\label{Reduced_Galerkin0}
	u_r^n=\sum_{j=1}^{r} \left(\alpha_r^n\right)_j {\widetilde \varphi}_{j}\quad \text { for } n=1,2\ldots, N_T.
\end{align}
We insert \eqref{Reduced_Galerkin0} into \eqref{ROM}  and have the following linear systems :
\begin{align}\label{semidiscrete1}
	\begin{split}
		M_r\partial_t^+ \alpha_r^n + A_r  \alpha_r^n  &=  b_{r},\quad n\ge 1,\\
		\alpha_r^0 &\textit{}= 0.
	\end{split}
\end{align}

For convenience, we should express the solution $u_r^n$ by using the standard finite element basis. By \eqref{D_matrix} and \eqref{Reduced_Galerkin0} we have 
\begin{align*}
	u_r^n=\sum_{j=1}^{r} \left(\alpha_r^n\right)_j {\widetilde \varphi}_{j}  = \sum_{j=1}^{r} \left(\alpha_r^n\right)_j \sum_{i=1}^{N} (q_j)_i \varphi_i = \sum_{i=1}^{N} (Q\alpha_r^n)_i\varphi_i.
\end{align*}

In other words, the coefficients of the solution $u_r^n$ in terms of the standard finite element basis are $Q\alpha_n$. It is worthwhile mentioning that to store the solution, we only need to save the matrix $Q\in \mathbb R^{N\times r}$ and the coefficient $\alpha_n\in \mathbb R^{r}$. Next, we summarize the above discussion in \Cref{algorithm3}.
\begin{algorithm}
	\caption{ (Fully implementation of \eqref{ROM})}
	\label{algorithm3}
	{\bf{Input}:}  \texttt{tol}, $\ell$, $N_T$, $\Delta t$, $M$, $A$, $b$
	\begin{algorithmic}[1]
		\State  $Q$ = \texttt{GetTheMatrixQ}$(\texttt{tol},\ell,M,A,b)$;  \hspace{6.5 cm} \% \Cref{D_matrixalgorthm2}
		\State Set $M_r = Q^\top M Q$; $A_r = Q^\top A Q$; $b_r = Q^\top b$;
		\State Solve $\left(\dfrac{1}{\Delta t}M_{r}+ A_r\right) \alpha_r^1= b_{r}$;
		\For {$n = 2$ to $N_T$}	
		\State Solve $\left(\dfrac{3}{2\Delta t}M_{r}+A_r\right) \alpha_r^n=  \dfrac{1}{\Delta t}M_{r}\left(2\alpha_r^{n-1} - \dfrac 1 2 \alpha_r^{n-2}\right)+   b_{r}$;
		\EndFor
		\State {\bf return} $Q, \alpha_n$
	\end{algorithmic}
\end{algorithm}

\begin{example}\label{Example3}
	We revisit the \Cref{Example0} with the same problem data, mesh and time step.   We choose $\ell=10$, \texttt{tol} $= 10^{-14}$ in \Cref{algorithm3}.  We report the dimension $r$ and the wall  time of the ROM  in \Cref{table_1}. Comparing with \Cref{table_0}, we see that our ROM is much faster than  standard solvers. We also compute the $L^2$-norm error between the solutions of the FEM and the ROM at the final time $T=1$, we see that the error is close to the machine error. This motivated us that the solutions of the FEM and of the ROM are the same if we take \texttt{tol} small enough. In \Cref{Error_analysis} we give a rigorous error analysis under an assumption on the source term $f$.
	\begin{table}[H]
		\centering
		{
			\begin{tabular}{c|c|c|c|c|c|c|c}
				\Xhline{1pt}
				
				\cline{3-8}
				$h$	& $1/2^4$ &$1/2^5$ 
				&$1/2^6$ 
				&$1/2^7$  &$1/2^8$ &$1/2^9$  &$1/2^{10}$ 
				\\
				\cline{1-8}
				{$r$}
				& 	  6&   6&   6&   6&   6&   6&   6\\ 
				\cline{1-8}
				{Wall time (s)}
				& 	  0.03&   0.04&   0.13&   0.62&   2.03&   9.86&   44.3\\ 
				\cline{1-8}
				{Error}
				&6.73E-11& 4.78E-15&	  2.03E-15&   7.50E-14&   6.87E-13&  7.01E-13&   7.88E-12\\ 
				\Xhline{1pt}

			\end{tabular}
		}
		\caption{\Cref{Example3}: The dimension and wall time (seconds) of the  ROM. The $L^2$-norm error between the solutions of the FEM and the ROM at the final time.}\label{table_1}
	\end{table}		
\end{example}

\section{Error Analysis}\label{Error_analysis}
Next, we provide a fully-discrete convergence analysis of the above new reduced order method for  linear parabolic equations.	Throughout the section, the  constant $C$  depends on the polynomial degree $k$, the domain, the shape regularity of the mesh and the problem data. But, it does not depend on the mesh size $h$, the time step $\Delta t$ and the dimension of the ROM.

\subsection{Main assumption and main result} 
First, we recall that $\Pi$ is the standard $L^2$ projection (see \eqref{L2Projection}) and $\{\phi_{j,h}\}_{j=1}^N$ are the eigenfuctions of \eqref{eigfunctions} corresponding to the eigenvalues $\{\lambda_{j,h}\}_{j=1}^N$.

Next, we give our main assumptions in this section:
\begin{assumption}\label{Assumption1}
	Low rank of $f$:  there exist  $\{c_j\}_{j=1}^\ell$ such that
	\begin{align}\label{Ass1}
		\Pi f = \sum_{j=1}^\ell c_j \phi_{m_jh}.
	\end{align}
\end{assumption}
\begin{assumption}\label{Assumption2}
	The regularity of the solution of \eqref{Parabolic_Original_Model} is smooth enough.
\end{assumption}
Now,  we state our main result in this section:
\begin{tcolorbox}
	\begin{theorem}\label{Main_res}
		Let $u$ be the solution of \eqref{Parabolic_Original_Model} and $u_r^n$  be the solution of \eqref{ROM} by setting \textup{\texttt{tol}}$ = 0$ in \Cref{algorithm3}.  If Assumption \ref{Assumption1} and Assumption \ref{Assumption2} hold, then we have 
		\begin{align*}
			\left\|u(t_n) - u_r^n\right\|  \le C\left( h^{k+1} + (\Delta t)^2 \right).
		\end{align*}
	\end{theorem}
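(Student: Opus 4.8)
The plan is to insert the fully-discrete finite element solution $u_h^n$ of \eqref{discreteODE} as an intermediary and split
\begin{align*}
	\|u(t_n) - u_r^n\| \le \|u(t_n) - u_h^n\| + \|u_h^n - u_r^n\|.
\end{align*}
The first term is the classical error of the backward-Euler/BDF2 finite element scheme, which under the regularity hypothesis of \Cref{Assumption2} is bounded by $C(h^{k+1} + (\Delta t)^2)$ by standard arguments (the Ritz/elliptic projection yields the spatial $O(h^{k+1})$ bound in $L^2$, and the second-order consistency of BDF2 yields the temporal $O((\Delta t)^2)$ bound). The crux of the theorem is therefore to prove that the second term \emph{vanishes}: with \texttt{tol}$=0$ the reduced solution coincides exactly with the full finite element solution, $u_r^n = u_h^n$. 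This is precisely where \Cref{Assumption1} is used.

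To show $u_r^n=u_h^n$, I would first expand the snapshots in the discrete eigenbasis $\{\phi_{j,h}\}$. From \eqref{bilinear_Ah1}--\eqref{Generate_basis} one has $\mathfrak u_h^i = \mathcal A_h^{-i}\,\Pi f$, so \Cref{Assumption1} gives
\begin{align*}
	\mathfrak u_h^i = \mathcal A_h^{-i}\sum_{j=1}^\ell c_j\, \phi_{m_jh} = \sum_{j=1}^\ell c_j\, \lambda_{m_jh}^{-i}\,\phi_{m_jh}, \qquad i \ge 1.
\end{align*}
Grouping the indices according to the distinct values among $\{\lambda_{m_jh}\}$ and using nonsingularity of the associated Vandermonde matrix in the reciprocals $\mu_{m_j}=1/\lambda_{m_jh}$, the snapshot span equals the eigenspace $E=\operatorname{span}\{\phi_{m_1h},\dots,\phi_{m_\ell h}\}$ (more precisely, the span of the distinct eigencomponents of $\Pi f$). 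Since the basis produced in \Cref{D_matrixalgorthm2} with \texttt{tol}$=0$ retains every nonzero mode and hence spans exactly the range of the snapshots, this yields $V_r = \operatorname{span}\{\mathfrak u_h^1,\dots,\mathfrak u_h^\ell\} = E$.

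Next I would show $u_h^n \in E$ by diagonalizing \eqref{discreteODE}. Writing $u_h^n=\sum_{j=1}^N\beta_j^n\phi_{j,h}$ and testing against $\phi_{i,h}$, the scheme decouples into the scalar recursions
\begin{align*}
	\tfrac{1}{\lambda_{i,h}}\,\partial_t^+\beta_i^n + \beta_i^n = (\Pi f, \phi_{i,h}), \qquad \beta_i^0 = 0,
\end{align*}
where I used $a(\phi_{i,h},\phi_{j,h})=\delta_{ij}$ and $(\phi_{i,h},\phi_{j,h})=\delta_{ij}/\lambda_{i,h}$. For every index $i\notin\{m_1,\dots,m_\ell\}$ the right-hand side vanishes by \Cref{Assumption1}, so $\beta_i^n\equiv 0$ by induction on $n$; hence $u_h^n\in E=V_r$ for all $n$. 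Because $u_h^n\in V_r$ already satisfies \eqref{discreteODE} for all test functions in $V_h\supset V_r$, it in particular satisfies the reduced equation \eqref{ROM}, and uniqueness of the solution of \eqref{ROM} forces $u_r^n=u_h^n$, so $\|u_h^n-u_r^n\|=0$ and the theorem follows. The main obstacle is the middle step, establishing $V_r=E$ rigorously: this requires the Vandermonde argument to handle possible repeated eigenvalues correctly and to confirm that truncation at \texttt{tol}$=0$ keeps exactly the eigencomponents carrying the source. The decoupling argument and the invocation of the standard FEM/BDF2 estimate are then routine.
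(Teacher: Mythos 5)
Your overall strategy is the same as the paper's: split the error via the full FEM solution $u_h^n$, invoke the standard backward-Euler/BDF2 finite element estimate for $\|u(t_n)-u_h^n\|$ (the paper's \Cref{lemma1}, stated there without proof), and reduce the theorem to the exact coincidence $u_r^n=u_h^n$ (the paper's \Cref{lemma2}). Your mechanics for the coincidence are in places cleaner than the paper's: diagonalizing \eqref{discreteODE} in the eigenbasis $\{\phi_{j,h}\}$ and observing that unexcited modes stay zero replaces the paper's verification of an explicit ansatz (\Cref{NonChang} and \eqref{def_alpha}), and your appeal to uniqueness of the reduced solution replaces the paper's energy/telescoping argument at the end of the proof of \Cref{lemma2}. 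Both substitutions are valid.

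However, there is a genuine gap in the repeated-eigenvalue case, which is exactly the case the paper spends most of its effort on. When two indices share an eigenvalue, say $\lambda_{m_1,h}=\lambda_{m_2,h}$, the identification $V_r=E=\operatorname{span}\{\phi_{m_1,h},\dots,\phi_{m_\ell,h}\}$ is false: the snapshots only ever contain the single combination $c_1\phi_{m_1,h}+c_2\phi_{m_2,h}$ (scaled by powers of the common $\mu$), so the Vandermonde argument gives $\dim V_r=\ell-1$ and $V_r\subsetneq E$; this is precisely the rank deficiency displayed in \eqref{Vandemonde}. You hedge correctly in a parenthesis (``the span of the distinct eigencomponents of $\Pi f$''), but your decoupling step only shows $u_h^n\in E$, which does not by itself give $u_h^n\in V_r$, and your closing remark that one must ``establish $V_r=E$ rigorously'' asks for something that cannot hold. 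The missing idea is different: within each group of equal eigenvalues the scalar recursions are identical up to the factors $c_j$, so the mode coefficients remain proportional to the source coefficients for all $n$ --- the paper's identity \eqref{c1c2}, $\alpha_1^n c_2=\alpha_2^n c_1$ --- and therefore the discrete solution only excites the combined component $c_1\phi_{m_1,h}+c_2\phi_{m_2,h}\in V_r$. This is the content of the paper's \Cref{NonChang22}. With that single proportionality step added, your argument closes and is otherwise complete.
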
			
\end{tcolorbox}

\subsection{Sketch the proof of \Cref{Main_res}}
To prove \Cref{Main_res}, we first bound the error
between the solutions of the PDE  \eqref{Parabolic_Original_Model} and FEM
\eqref{discreteODE}. Next we prove that  the solutions of FEM
\eqref{discreteODE} and the ROM \eqref{ROM} are exactly the same. Then we obtain  a bound on the error between the solutions of PDE \eqref{Parabolic_Original_Model} and the ROM \eqref{ROM}.

We begin by bounding the error between the solutions of the FEM \eqref{discreteODE} and PDE \eqref{Parabolic_Original_Model}. 
\begin{lemma}\label{lemma1}
	Let $u$ and $ u_h^n$ be the solution of \eqref{Parabolic_Original_Model} and \eqref{discreteODE}, respectively. If Assumption \ref{Assumption2} holds, then we have
	\begin{align*}
		\left\|u(t_n) - u_h^n\right\|  \le C\left( h^{k+1} + (\Delta t)^2 \right).
	\end{align*}
\end{lemma}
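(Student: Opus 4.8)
The plan is to combine the classical elliptic-projection splitting with an energy (G-stability) estimate tailored to the BDF2 recursion and its backward-Euler startup. First I would introduce the Ritz (elliptic) projection $R_h\colon V\to V_h$ defined by $a(R_h w - w, v_h)=0$ for all $v_h\in V_h$, and split the error as
\begin{align*}
	u(t_n) - u_h^n = \big(u(t_n) - R_h u(t_n)\big) + \big(R_h u(t_n) - u_h^n\big) =: \rho^n + \theta^n,
\end{align*}
where $\rho^n\in V$ and $\theta^n\in V_h$. The term $\rho^n$ is a purely spatial approximation error: under \Cref{Assumption2} the standard elliptic estimate gives $\|\rho^n\|\le C h^{k+1}$, so it remains only to control $\theta^n$.

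Next I would derive the error equation for $\theta^n$. Subtracting \eqref{discreteODE} from the space-discrete form of the PDE \eqref{Parabolic_Original_Model} evaluated at $t_n$ and using the defining property of $R_h$ to cancel the $a(\cdot,\cdot)$ contributions, one obtains, for every $v_h\in V_h$,
\begin{align*}
	\big(\partial_t^+ \theta^n, v_h\big) + a\big(\theta^n, v_h\big) = \big(\omega^n, v_h\big),\qquad \omega^n := \partial_t^+ R_h u(t_n) - u_t(t_n).
\end{align*}
I would then split the consistency term as $\omega^n = -\partial_t^+\rho^n + \big(\partial_t^+ u(t_n) - u_t(t_n)\big)$. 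The first piece is a time difference of the projection error, bounded by $Ch^{k+1}$ after integrating $u_t$ in time; the second is the temporal truncation error of the operator $\partial_t^+$, which by Taylor expansion is $O((\Delta t)^2)$ for the BDF2 steps $n\ge2$ and $O(\Delta t)$ for the single backward-Euler step $n=1$, the requisite bounds on $u_{tt}$ and $u_{ttt}$ being furnished by \Cref{Assumption2}.

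Then comes the stability argument, which I expect to be the main obstacle. For $n\ge2$ I would test the error equation with $v_h=\theta^n$ and use the telescoping BDF2 (G-stability) identity
\begin{align*}
	(3a-4b+c,\,a) = \tfrac12\big(\|a\|^2-\|b\|^2\big) + \tfrac12\big(\|2a-b\|^2-\|2b-c\|^2\big) + \tfrac12\|a-2b+c\|^2,
\end{align*}
with $(a,b,c)=(\theta^n,\theta^{n-1},\theta^{n-2})$, together with the coercivity of $a(\cdot,\cdot)$, to bound $(\partial_t^+\theta^n,\theta^n)$ from below by a telescoping expression. Summing over $n$ and invoking a discrete Gronwall inequality yields $\|\theta^n\|^2 \le C\|\theta^1\|^2 + C\,\Delta t\sum_{j=2}^{n}\|\omega^j\|^2$. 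Since $\theta^0=0$ (as $u_0=0$ and $u_h^0=0$), the delicate point is the backward-Euler first step: testing with $\theta^1$ and retaining the coercive term gives $\tfrac{1}{2\Delta t}\|\theta^1\|^2\le \|\omega^1\|\,\|\theta^1\|$, hence $\|\theta^1\|\le 2\Delta t\,\|\omega^1\|=O((\Delta t)^2+\Delta t\,h^{k+1})$, so that the \emph{single} first-order step does not pollute the global second-order rate.

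Substituting the bounds $\|\omega^j\|\le C(h^{k+1}+(\Delta t)^2)$ for $j\ge2$ then gives $\|\theta^n\|\le C(h^{k+1}+(\Delta t)^2)$, and combining with $\|\rho^n\|\le Ch^{k+1}$ via the triangle inequality completes the proof. The technical heart is organizing the G-stability telescoping so that the backward-Euler startup and the BDF2 recursion combine cleanly under one discrete Gronwall estimate, and verifying that the isolated low-order startup step contributes only at order $(\Delta t)^2$ globally.
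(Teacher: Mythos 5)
Your proposal is correct: the paper itself omits the proof of \Cref{lemma1}, stating only that it ``follows from standard estimates of finite element methods,'' and your argument---Ritz-projection splitting, the BDF2 G-stability identity (which is exactly the identity the paper uses later in the proof of \Cref{lemma2}), and a discrete Gronwall estimate with separate treatment of the backward-Euler startup---is precisely that standard argument. Nothing is missing; the key points (the $O(\Delta t)$ truncation of the single first step entering only after multiplication by $\Delta t$, and $\theta^0=0$ from $u_0=0$) are handled correctly.
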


The proof of \Cref{lemma1} follows from standard estimates of  finite element methods, therefore we skip it. Next, we prove that  the solutions of FEM
\eqref{discreteODE} and the ROM \eqref{ROM} are exactly the same. 
\begin{lemma}\label{lemma2}
	Let $ u_h^n$ be the solution of \eqref{discreteODE} and $u_r^n$ be the solution of \eqref{ROM}  by setting \textup{\texttt{tol}}$ = 0$ in \Cref{algorithm3}.  If \Cref{Assumption1} holds, then for all $n=1,2,\ldots, N_T$ we have 
	\begin{align*}
		u_h^n  =  u_r^n.
	\end{align*}
\end{lemma}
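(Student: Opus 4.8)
The plan is to show that both the finite element solution $u_h^n$ of \eqref{discreteODE} and the reduced space $V_r$ are contained in the same low-dimensional space $W := \operatorname{span}\{\phi_{m_1 h}, \ldots, \phi_{m_\ell h}\}$ spanned by the eigenfunctions of \eqref{eigfunctions} appearing in the representation \eqref{Ass1} of $\Pi f$, and in fact that $V_r = W$ when $\texttt{tol}=0$. The conclusion then follows from Galerkin consistency together with uniqueness of the fully discrete scheme.

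First I would analyze the Krylov sequence \eqref{Generate_basis}. Since $\mathfrak{u}_h^0 = f$ enters only through $(f,v_h) = (\Pi f, v_h)$, the first solve gives $\mathfrak{u}_h^1 = \mathcal{A}_h^{-1}\Pi f$, and hence $\mathfrak{u}_h^i = \mathcal{A}_h^{-i}\Pi f$ for $1\le i\le \ell$, where $\mathcal{A}_h$ is the operator from \eqref{bilinear_Ah1}. Using \eqref{Ass1} together with $\mathcal{A}_h\phi_{j,h} = \lambda_{j,h}\phi_{j,h}$, we obtain $\mathfrak{u}_h^i = \sum_{j=1}^\ell c_j \lambda_{m_j h}^{-i}\phi_{m_j h}$, so every Krylov vector lies in $W$. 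After discarding vanishing coefficients and merging terms with repeated eigenvalues, we may assume the $\lambda_{m_j h}$ are distinct and the $c_j$ are nonzero; the coefficient matrix with entries $c_j\lambda_{m_j h}^{-i}$ then factors as a Vandermonde matrix times invertible diagonal matrices, so the $\ell$ Krylov vectors are linearly independent and span all of $W$. Consequently $K_\ell = U_\ell^\top A U_\ell$ has exactly $\ell$ positive eigenvalues, and with $\texttt{tol}=0$ the reduced basis retains every nonzero mode, giving $V_r = W$.

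Next I would show that the finite element solution never leaves $W$. Expanding $u_h^n = \sum_{j=1}^N \beta_j^n \phi_{j,h}$ in the eigenbasis and testing \eqref{discreteODE} against each $\phi_{i,h}$, the orthogonality relations $a(\phi_{i,h},\phi_{j,h}) = \delta_{ij}$ and $(\phi_{i,h},\phi_{j,h}) = \delta_{ij}/\lambda_{i,h}$ decouple the scheme into the scalar recursions $\lambda_{i,h}^{-1}\partial_t^+\beta_i^n + \beta_i^n = (\Pi f, \phi_{i,h})$. For every index $i\notin\{m_1,\ldots,m_\ell\}$ the right-hand side vanishes by \eqref{Ass1}, and since $\beta_i^0 = 0$ the corresponding components remain zero for all $n$ by induction through the one-step and BDF2 recursions. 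Hence $u_h^n \in W = V_r$ for every $n$.

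Finally, because $u_h^n\in V_r$ and \eqref{discreteODE} holds for all test functions in $V_h\supseteq V_r$, the function $u_h^n$ satisfies exactly the reduced scheme \eqref{ROM}. The system matrices $\tfrac{1}{\Delta t}M_r + A_r$ and $\tfrac{3}{2\Delta t}M_r + A_r$ are symmetric positive definite (as $Q$ has full column rank and $M$, $A$ are positive definite), so \eqref{ROM} admits a unique solution at each step; matching the initial data $u_h^0 = u_r^0 = 0$ and inducting on $n$ yields $u_h^n = u_r^n$. I expect the main obstacle to be the spanning claim in the second paragraph---that the Krylov vectors fill all of $W$ rather than a proper subspace---since this is precisely where the low-rank structure of \Cref{Assumption1} must be combined with the nondegeneracy (distinct eigenvalues, nonzero coefficients) that makes the Vandermonde matrix invertible; it is the resulting identity $V_r = W$ that guarantees the finite element iterate $u_h^n$ genuinely lies in the reduced space.
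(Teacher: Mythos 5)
Your proposal is correct, and its skeleton agrees with the paper's: identify $V_r$ (for \texttt{tol}$\,=0$) with the span of the eigenfunctions active in \eqref{Ass1}, show that the FEM iterates never leave that span, and conclude that $u_h^n$ solves the reduced scheme and hence equals $u_r^n$. The genuine difference lies in how the two hard points are executed. For repeated eigenvalues the paper does not merge: it assumes (WLOG) a single repeated pair \eqref{eig_ass}, writes the FEM solution explicitly as $u_h^n=\sum_i \alpha_i^n \phi_{m_i,h}$ via the scalar recursions \eqref{def_alpha} (\Cref{NonChang}), and then uses the proportionality \eqref{c1c2} together with an explicit inversion of the truncated Vandermonde system \eqref{Vandemonde1} (\Cref{NonChang22}) to cancel the redundant eigendirection and place $u_h^n$ in the $(\ell-1)$-dimensional Krylov space. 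Your merging reduction reaches the same conclusion more cleanly and for arbitrary multiplicities; the one point you should make explicit is that after merging you must also re-choose the orthonormal eigenbasis of \eqref{eigfunctions} so that each merged function is itself a basis eigenfunction (legitimate, since eigenbases of a repeated eigenvalue are not unique), so that the decoupling argument in your second step yields $u_h^n\in W$ for the \emph{merged} $W$ --- the same space your Vandermonde argument identifies with $V_r$ --- rather than for the larger unmerged span; as written, your second paragraph redefines $W$ while your third paragraph still appears to expand in the original eigenbasis. For the final identification the paper runs a discrete energy argument on $e^n=u_r^n-u_h^n$ (backward Euler and BDF2 telescoping identities), whereas you invoke unique solvability of the SPD reduced systems; both are valid, yours is more elementary, and as a bonus your treatment covers general multiplicities where the paper's stated WLOG handles only one repeated pair.
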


As a consequence, \Cref{lemma1,lemma2}  give the proof of \Cref{Main_res}. 

\subsection{Proof of \Cref{lemma2}} 
Since the eigenvalue problem \eqref{eigfunctions} might have repeated eigenvalues, without loss of generality, we assume that only $\phi_{m_1,h}$ and $\phi_{m_2,h}$ share the same eigenvalues $\lambda_{m_1,h} = \lambda_{m_2,h}$. Recall that $\mu_i = 1/\lambda_{i,h}$, then we have
\begin{align}\label{eig_ass}
	\mu_{m_1} = 	\mu_{m_2}>\mu_{m_3} >\ldots > \mu_{m_\ell}. 
\end{align}

By \eqref{bilinear_Ah1} we know that $\phi_{i,h}$ is the eigenfuntion of  $\mathcal A_h^{-1}$ corresponding to the eigenvalue $\mu_i$. Now, we apply $\mathcal A_h^{-1}$, $\mathcal A_h^{-2}$, $\ldots$, $\mathcal A_h^{-\ell}$ to  both sides of \eqref{Ass1} to obtain:
\begin{align}\label{Vandemonde}
	\left[\begin{array}{cc}
		\mathfrak{u}_h^1  \\[0.2cm]
		\mathfrak{u}_h^2\\
		\vdots\\
		\mathfrak{u}_h^\ell
	\end{array}\right] = \left[\begin{array}{cccc}
		c_1 \mu_{m_1}&c_2 \mu_{m_2}&\cdots&c_\ell\mu_{m_\ell}\\[0.2cm]
		c_1 \mu_{m_1}^2&c_2 \mu_{m_2}^2&\cdots&c_\ell\mu_{m_\ell}^2\\
		\vdots&\vdots&\cdots&\vdots\\
		c_1 \mu_{m_1}^{\ell}&c_2 \mu_{m_2}^\ell&\cdots&c_\ell\mu_{m_\ell}^\ell
	\end{array}\right] \left[\begin{array}{cc}
		\phi_{m_1,h}  \\[0.2cm]
		\phi_{m_2,h}\\
		\vdots\\
		\phi_{m_\ell,h}
	\end{array}\right].
\end{align}
By the assumption \eqref{eig_ass}, the rank of the coefficient matrix in \eqref{Vandemonde} is $\ell -1$. Hence 
\begin{align*}
	\textup{span}\{	\mathfrak{u}_h^1, 	\mathfrak{u}_h^2,\ldots, 	\mathfrak{u}_h^{\ell-1}\} = 	\textup{span}\{	\mathfrak{u}_h^1, 	\mathfrak{u}_h^2,\ldots, 	\mathfrak{u}_h^{\ell}\}. 
\end{align*}
This implies that the matrix $K_{\ell-1}$ (see \eqref{K_matrix}) is positive definite and $K_{\ell}$ is positive semi-definite. Therefore, if we set \textup{\texttt{tol}}$ = 0$ in the \Cref{D_matrixalgorthm2}, then the reduced basis space $V_r$ is given by
\begin{align*}
	V_r =  \textup{span}\{\widetilde \varphi_1, \widetilde\varphi_2,\ldots, \widetilde\varphi_{\ell-1}\} =	\textup{span}\{	\mathfrak{u}_h^1, 	\mathfrak{u}_h^2,\ldots, 	\mathfrak{u}_h^{\ell-1}\}.
\end{align*}
Furthermore, by \Cref{Optimization}, for any $j=1,2,\ldots, \ell-1$ we have 
\begin{align*}
	\mathfrak{u}_h^j = \sum_{i=1}^{\ell-1}\left( \mathfrak{u}_h^j, {\widetilde\varphi}_{i}\right)_{V} {\widetilde\varphi}_{i}.
\end{align*}

For $i=1,2\ldots, \ell$, we define  the sequence $\{\alpha_i^n\}_{n=1}^{N_T}$ by
\begin{align}\label{def_alpha}
	\begin{split}
		\partial_t^+\alpha_i^{n}  +  \dfrac 1 {\mu_{m_i}} \alpha_i^{n} &=  c_i, \qquad n\ge 1,\\
		\alpha_i^{0} & = 0.
	\end{split}
\end{align}

\begin{lemma}\label{NonChang}
	If  Assumption \ref{Assumption1} holds, then the unique solution of \eqref{discreteODE} is given by
	\begin{align}\label{Exact_Solution}
		u_h^n  = \sum_{i=1}^\ell \alpha_i^n
		\phi_{m_i,h}, \qquad  n=1,2,\ldots, N_T.
	\end{align} 
\end{lemma}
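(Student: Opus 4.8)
The plan is to verify directly that the proposed expression $u_h^n = \sum_{i=1}^\ell \alpha_i^n \phi_{m_i,h}$ solves the fully-discrete scheme \eqref{discreteODE}, and then invoke uniqueness. The key structural fact is that under Assumption~\ref{Assumption1}, the source term $\Pi f = \sum_{i=1}^\ell c_i \phi_{m_i,h}$ lives in the span of finitely many eigenfunctions $\phi_{m_i,h}$ of the discrete operator $\mathcal A_h$. Since $\mathcal A_h \phi_{m_i,h} = \lambda_{m_i,h}\phi_{m_i,h} = \mu_{m_i}^{-1}\phi_{m_i,h}$ and the $\{\phi_{m_i,h}\}$ are $(\cdot,\cdot)_V$-orthonormal (hence linearly independent), the whole scheme decouples into $\ell$ independent scalar recursions, one for each coefficient. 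This is precisely what the definition \eqref{def_alpha} of $\{\alpha_i^n\}$ encodes.

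First I would substitute the ansatz \eqref{Exact_Solution} into the weak form \eqref{discreteODE}, testing against $v_h = \phi_{m_j,h}$ for each $j$. Using the eigenrelation $a(\phi_{m_i,h}, \phi_{m_j,h}) = \lambda_{m_i,h}(\phi_{m_i,h},\phi_{m_j,h})$ together with the normalization $(\phi_{i,h},\phi_{j,h})_V = a(\phi_{i,h},\phi_{j,h}) = \delta_{ij}$, one finds that the $H$-inner product $(\phi_{m_i,h},\phi_{m_j,h}) = \mu_{m_i}\,\delta_{ij}$ (since $a = \lambda\,(\cdot,\cdot)$ on eigenpairs gives $(\phi_{m_i,h},\phi_{m_j,h}) = \lambda_{m_i,h}^{-1}a(\phi_{m_i,h},\phi_{m_j,h})$). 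Expanding the BDF2/backward-Euler operator $\partial_t^+$ term-by-term, the cross terms vanish by orthogonality and each $j$ yields
\begin{align*}
	\mu_{m_j}\,\partial_t^+ \alpha_j^n + \alpha_j^n = c_j,
\end{align*}
which is exactly \eqref{def_alpha} after dividing by $\mu_{m_j}$; the initial condition $u_h^0 = 0$ matches $\alpha_i^0 = 0$. Thus the ansatz satisfies the scheme.

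Second, I would close the argument by appealing to uniqueness of the solution of \eqref{discreteODE}: the fully-discrete operators $\frac{1}{\Delta t}M + A$ (first step) and $\frac{3}{2\Delta t}M + A$ (subsequent steps) are symmetric positive definite, so $u_h^n$ is uniquely determined at each time level; hence the constructed sequence is \emph{the} solution. I do not expect a serious obstacle here—the argument is essentially a decoupling computation—but the one point requiring care is bookkeeping the two-sided nature of $\partial_t^+$ (the backward-Euler first step versus BDF2 thereafter) so that the reduction to the scalar recursion \eqref{def_alpha} holds uniformly across both cases, and correctly tracking the identity $(\phi_{m_i,h},\phi_{m_j,h}) = \mu_{m_i}\delta_{ij}$ that converts the $V$-orthonormality into the $H$-inner product appearing in the mass term.
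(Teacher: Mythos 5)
Your overall strategy---verify the ansatz, then invoke uniqueness---matches the paper's, but your execution differs in a way that leaves a genuine gap. The paper verifies the scheme against an \emph{arbitrary} $v_h\in V_h$: using the eigenrelation $a(\phi_{m_i,h},v_h)=\mu_{m_i}^{-1}(\phi_{m_i,h},v_h)$, the left-hand side collapses to $\bigl(\sum_{i=1}^\ell\bigl[\partial_t^+\alpha_i^n+\mu_{m_i}^{-1}\alpha_i^n\bigr]\phi_{m_i,h},\,v_h\bigr)=(\Pi f,v_h)=(f,v_h)$ by \eqref{def_alpha}, with no orthogonality needed at any point. You instead test only against the $\ell$ eigenfunctions $\phi_{m_j,h}$, $j=1,\ldots,\ell$, and then conclude ``the ansatz satisfies the scheme.'' That conclusion does not follow: \eqref{discreteODE} must hold for \emph{every} $v_h$ in the $N$-dimensional space $V_h$, and you have checked it only on an $\ell$-dimensional subspace. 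The missing step is to test against the remaining eigenfunctions $\phi_{j,h}$, $j\notin\{m_1,\ldots,m_\ell\}$: there both sides vanish---the left side by the $V$- and $H$-orthogonality of eigenfunctions, the right side because $(f,\phi_{j,h})=(\Pi f,\phi_{j,h})=\sum_i c_i(\phi_{m_i,h},\phi_{j,h})=0$ by Assumption \ref{Assumption1}---and since $\{\phi_{j,h}\}_{j=1}^N$ spans $V_h$, linearity then gives the scheme for all $v_h$. This is a one-sentence repair, but as written your verification is incomplete precisely where Assumption \ref{Assumption1} does its work, namely in forcing the load to have no component outside $\mathrm{span}\{\phi_{m_i,h}\}$.

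There is also a bookkeeping slip in your decoupled recursion: testing against $\phi_{m_j,h}$ gives $(f,\phi_{m_j,h})=c_j\mu_{m_j}$, not $c_j$, since the identity $(\phi_{m_i,h},\phi_{m_j,h})=\mu_{m_i}\delta_{ij}$ that you correctly applied to the mass term applies equally to the source term. The correct per-mode equation is $\mu_{m_j}\partial_t^+\alpha_j^n+\alpha_j^n=c_j\mu_{m_j}$, which upon dividing by $\mu_{m_j}$ is indeed \eqref{def_alpha}; your displayed equation $\mu_{m_j}\partial_t^+\alpha_j^n+\alpha_j^n=c_j$ is inconsistent with the reduction you claim. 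On the positive side, your uniqueness argument (positive definiteness of $\frac{1}{\Delta t}M+A$ and $\frac{3}{2\Delta t}M+A$) is correct and supplies a justification that the paper's proof leaves implicit in the phrase ``the unique solution.''
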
	
\begin{proof}
	We only need  to check that \eqref{Exact_Solution} satisfies \eqref{discreteODE}. Substitute  	\eqref{Exact_Solution} into \eqref{discreteODE} we have 
	\begin{align*}
		\left(\partial_t^+ u_h, v_{h}\right)+  a\left( u_h^n,  v_{h}\right) 
		& = \left( \sum_{i=1}^\ell (\partial_t^+\alpha_i^n)\phi_{m_i,h} , v_{h}\right)+  a\left(\sum_{i=1}^\ell \alpha_i^n\phi_{m_i,h},  v_{h}\right)\\
		& = \left( \sum_{i=1}^\ell (\partial_t^+\alpha_i^n)\phi_{m_i,h} , v_{h}\right)+  \left(\sum_{i=1}^\ell \frac{\alpha_i^n}{\mu_{m_i}}\phi_{m_i,h},  v_{h}\right),
	\end{align*}
	where we used the fact that $\phi_{m_i,h}$ is the eigenvector of \eqref{eigfunctions} corresponding to the eigenvalue $1/\mu_{m_i}$ in the last equality. Therefore, by  \eqref{def_alpha} and Assumption \ref{Assumption1} we have 
	\begin{align*}
		\left(\partial_t^+ u_h, v_{h}\right)+  a\left( u_h^n,  v_{h}\right)  = 	\left(\sum_{i=1}^\ell c_i \phi_{m_i,h}, v_{h}\right) = (\Pi f, v_h) = (f, v_h).
	\end{align*}
\end{proof}

Due to the assumption \eqref{eig_ass}, it is easy to show that for all $n=1,2,\ldots, N_T$,
\begin{align}\label{c1c2}
	\alpha_1^n c_2 = \alpha_2^n c_1.
\end{align}

\begin{lemma}\label{NonChang22}
	Let  $ u_h^n$ be the solution of \eqref{discreteODE} and set \textup{\texttt{tol}}$ = 0$ in \Cref{algorithm3}.  If Assumption \ref{Assumption1} and \eqref{eig_ass} hold, then for $n=1,2,\ldots, N_T$ we have
	\begin{align*}
		u_h^n \in V_r=	\textup{span}\{	\mathfrak{u}_h^1, 	\mathfrak{u}_h^2,\ldots, 	\mathfrak{u}_h^{\ell-1}\}.
	\end{align*}
\end{lemma}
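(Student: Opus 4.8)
The plan is to combine the explicit representation of the FEM solution from \Cref{NonChang} with the relation \eqref{c1c2} and the Vandermonde structure already exhibited in \eqref{Vandemonde}. By \Cref{NonChang} we have the exact expansion
\begin{align*}
	u_h^n = \sum_{i=1}^\ell \alpha_i^n \phi_{m_i,h} = \alpha_1^n\phi_{m_1,h} + \alpha_2^n\phi_{m_2,h} + \sum_{i=3}^\ell \alpha_i^n\phi_{m_i,h},
\end{align*}
so the task reduces to checking that each of these pieces lies in $V_r = \textup{span}\{\mathfrak{u}_h^1,\ldots,\mathfrak{u}_h^{\ell-1}\}$.

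First I would give an explicit description of $V_r$ in the eigenbasis. Grouping the two degenerate modes in \eqref{Vandemonde} and using $\mu_{m_1}=\mu_{m_2}$, one writes
\begin{align*}
	\mathfrak{u}_h^k = \mu_{m_1}^k\big(c_1\phi_{m_1,h}+c_2\phi_{m_2,h}\big) + \sum_{i=3}^\ell c_i\mu_{m_i}^k\phi_{m_i,h}, \qquad k=1,\ldots,\ell-1.
\end{align*}
Because the $\ell-1$ nodes $\mu_{m_1},\mu_{m_3},\ldots,\mu_{m_\ell}$ are distinct and strictly positive by \eqref{eig_ass}, the associated $(\ell-1)\times(\ell-1)$ Vandermonde-type matrix is invertible, and hence
\begin{align*}
	V_r = \textup{span}\big\{\, c_1\phi_{m_1,h}+c_2\phi_{m_2,h},\ \phi_{m_3,h},\ \ldots,\ \phi_{m_\ell,h}\,\big\}.
\end{align*}
This is the step I expect to carry the most weight: the key point is that $V_r$ does not contain $\phi_{m_1,h}$ and $\phi_{m_2,h}$ separately, but only the single combination $c_1\phi_{m_1,h}+c_2\phi_{m_2,h}$, so the two degenerate modes get merged into one reduced direction.

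With this description in hand the conclusion is immediate. The tail $\sum_{i=3}^\ell\alpha_i^n\phi_{m_i,h}$ visibly lies in $V_r$. For the degenerate pair, the relation \eqref{c1c2} gives $\alpha_1^n c_2 = \alpha_2^n c_1$, so (assuming $c_1\neq 0$; otherwise relabel) $\alpha_1^n\phi_{m_1,h}+\alpha_2^n\phi_{m_2,h} = (\alpha_1^n/c_1)\big(c_1\phi_{m_1,h}+c_2\phi_{m_2,h}\big)$, which is a scalar multiple of the merged direction and therefore also belongs to $V_r$. Adding the two contributions yields $u_h^n\in V_r$ for every $n$, as claimed. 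The only bookkeeping to watch is the possibility that some coefficient $c_i$ vanishes, in which case the corresponding mode is simply absent from both $\Pi f$ and the expansion of $u_h^n$, and the argument goes through verbatim on the remaining modes.
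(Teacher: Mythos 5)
Your proof is correct, and it rests on the same three ingredients as the paper's own argument: the representation \eqref{Vandemonde}, invertibility of a Vandermonde-type matrix coming from the distinctness of the eigenvalues in \eqref{eig_ass}, and the proportionality \eqref{c1c2} for the degenerate pair. The bookkeeping, however, is organized differently. The paper splits off $\phi_{m_1,h}$, inverts the matrix $S$ attached to the modes $m_2,\ldots,m_\ell$, writes each $\phi_{m_i,h}$ with $i\ge 2$ as a combination of the $\mathfrak{u}_h^j$ plus a multiple of $\phi_{m_1,h}$, substitutes into the expansion from \Cref{NonChang}, and checks that the two $\phi_{m_1,h}$ contributions cancel precisely because $\alpha_1^n c_2=\alpha_2^n c_1$. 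You instead characterize the reduced space once and for all, $V_r=\textup{span}\{c_1\phi_{m_1,h}+c_2\phi_{m_2,h},\,\phi_{m_3,h},\ldots,\phi_{m_\ell,h}\}$, and then observe that \eqref{c1c2} forces $u_h^n$ to excite the degenerate eigenspace only through that single merged direction. This buys transparency: it isolates the geometric reason the lemma holds --- the dynamics \eqref{def_alpha} preserve the ratio $c_1:c_2$ inside the degenerate eigenspace, which is exactly why the solution never leaves an $(\ell-1)$-dimensional space --- whereas the paper reaches the same conclusion by direct computation and cancellation. One caveat applies to both arguments: invertibility of the column-scaled Vandermonde matrix also requires $c_i\neq 0$ for the non-degenerate modes, since each column carries a factor $c_i$; the paper leaves this implicit, while you flag it explicitly, and your closing remark (if $c_i=0$ then $\alpha_i^n\equiv 0$ by \eqref{def_alpha}, so the mode drops out of both $\Pi f$ and $u_h^n$) disposes of it correctly.
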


\begin{proof}
	We rewrite the system \eqref{Vandemonde} as
	\begin{align}\label{Vandemonde1}
		\left[\begin{array}{cc}
			\mathfrak{u}_h^1  \\[0.2cm]
			\mathfrak{u}_h^2\\
			\vdots\\
			\mathfrak{u}_h^{\ell-1}
		\end{array}\right] = \left[\begin{array}{cccc}
			c_2 \mu_{m_2}&c_3 \mu_{m_3}&\cdots&c_\ell\mu_{m_\ell}\\[0.2cm]
			c_2 \mu_{m_2}^2&c_3 \mu_{m_3}^2&\cdots&c_\ell\mu_{m_\ell}^2\\
			\vdots&\vdots&\cdots&\vdots\\
			c_2 \mu_{m_2}^{\ell-1}&c_3 \mu_{m_3}^{\ell-1}&\cdots&c_\ell\mu_{m_\ell}^{\ell-1}
		\end{array}\right] \left[\begin{array}{cc}
			\phi_{m_2,h}  \\[0.2cm]
			\phi_{m_3,h}\\
			\vdots\\
			\phi_{m_\ell,h}
		\end{array}\right] + \left[\begin{array}{cc}
			c_1 \mu_{m_1} \\[0.2cm]
			c_1 \mu_{m_1}^2\\
			\vdots\\
			c_1\mu_{m_1,h}^{\ell-1}
		\end{array}\right]	\phi_{m_1,h}. 
	\end{align}	
	
	We denote the coefficient matrix of \eqref{Vandemonde1} by $S$, it is obvious that $S$ is invertible since $\mu_{m_2}$, $\mu_{m_3}$, $\ldots$,  $\mu_{m_\ell}$ are distinct. Furthermore, since $\mu_{m_1} = \mu_{m_2}$, then 
	\begin{align*}
		S\left[\begin{array}{cc}
			1 \\[0.2cm]
			0\\
			\vdots\\
			0
		\end{array}\right] = \left[\begin{array}{cc}
			c_2 \mu_{m_2} \\[0.2cm]
			c_2 \mu_{m_2}^2\\
			\vdots\\
			c_2\mu_{m_2,h}^{\ell-1}
		\end{array}\right]= \left[\begin{array}{cc}
			c_2 \mu_{m_1} \\[0.2cm]
			c_2 \mu_{m_1}^2\\
			\vdots\\
			c_2\mu_{m_1,h}^{\ell-1}
		\end{array}\right].
	\end{align*} 
	This implies
	\begin{align*}
		\left[\begin{array}{cc}
			\phi_{m_2,h}  \\[0.2cm]
			\phi_{m_3,h}\\
			\vdots\\
			\phi_{m_\ell,h}
		\end{array}\right]  = S^{-1}\left[\begin{array}{cc}
			\mathfrak{u}_h^1  \\[0.2cm]
			\mathfrak{u}_h^2\\
			\vdots\\
			\mathfrak{u}_h^{\ell-1}
		\end{array}\right]  - \dfrac{c_1}{c_2}\left[\begin{array}{cc}
			1 \\[0.2cm]
			0\\
			\vdots\\
			0
		\end{array}\right]\phi_{m_1,h}. 	
	\end{align*}
	Then by \Cref{NonChang} and \eqref{c1c2} we have 
	\begin{align*}
		u_h^n  &= \sum_{i=1}^\ell\alpha_i^n
		\phi_{m_i,h}= \alpha_1^n
		\phi_{m_1,h} + \sum_{i=2}^\ell\alpha_i^n
		\phi_{m_i,h}\\
		& = \alpha_1^n
		\phi_{m_1,h}  + \sum_{i=2}^\ell\alpha_i^n \sum_{j=1}^{\ell-1} S^{-1}_{i-1,j}	\mathfrak{u}_h^j - \dfrac{c_1}{c_2}\alpha_2^n	\phi_{m_1,h}\\
		& =  \sum_{i=2}^\ell\alpha_i^n \sum_{j=1}^{\ell-1} S^{-1}_{i-1,j}	\mathfrak{u}_h^j. 
	\end{align*}
	This completes the proof.
	
\end{proof}

\begin{proof}[Proof of \Cref{lemma2}]
	First, we take  $v_h\in V_r\subset V_h$ in \eqref{discreteODE} to obtain
	\begin{align}\label{full_model_e}
		\left(\partial_t^+ u_h, v_{h}\right)+  a\left(u_h^n,  v_{h}\right) =\left(\sum_{i=1}^\ell c_i\phi_{m_i,h}, v_{h}\right). 	
	\end{align}
	Subtract \eqref{full_model_e} from \eqref{ROM} and we let $e^n = u_r^n - u_h^n$, then  
	\begin{align*}
		\left(\partial_t^+ e^n, v_{h}\right)+  a\left(e^n,  v_{h}\right)  =0.
	\end{align*}
	By \Cref{NonChang22}  we  take $v_h = e^n\in V_r$ and the identity
	\begin{align*}
		(a-b, a) & = \dfrac 1 2 (\|a\|^2 - \|b^2\|) + \dfrac 1 2 \|a-b\|^2,\\
		\frac{1}{2}(3 a-4 b+c,  a )&= 
		\frac{1}{4}\left[\|a\|^{2}+\|2 a-b\|^{2}-\|b\|^{2}-\|2 b-c\|^{2}\right]+\frac{1}{4}\|a-2 b+c\|^{2},
	\end{align*}
	to get
	\begin{align*}
		\|e^1\|^2 - \|e^0\|^2 + \left\|e^1-e^0\right\|^2 +2 \Delta t\left\| e^1 \right\|_V^2 = 0.
	\end{align*}
	Since $e^0 = 0$, then  $e^1=0$. In other words
	\begin{align*}
		u_r^1  = u_h^1.
	\end{align*}
	For $n\ge 2$ we have
	\begin{align*}
		\left[\|e^n\|^{2} - 		 \|e^{n-1}\|^{2}\right] +  \left[\|2e^n-e^{n-1}\|^{2} - 		 \|2e^{n-1}-e^{n-2}\|^{2}\right] + \|e^n-2 e^{n-1}+e^{n-2}\|^{2} + 4\Delta t \|e^n\|_V  = 0.
	\end{align*}
	Summing both sides of the above identity from $n=2$ to $n=m$ completes the proof of \Cref{lemma2}.
\end{proof}

\section{General data}\label{NEwway}
In this section, we extend the new algorithm to more general cases. We assume that the source term $f$ can be expressed or approximated  by a few only time dependent functions $f_{i}(t)$ and space dependent functions $g_i(x)$, i.e., 
\begin{align*}
	f(t,x) = \sum_{i=1}^m f_i(t) g_i(x),
\end{align*}
or
\begin{align*}
	f(t,x) \approx  \sum_{i=1}^m f(t_i^*,x)L_{k,i}(t):= \sum_{i=1}^m f_i(t) g_i(x),
\end{align*}
where $\{t_i^*\}_{i=1}^m$ are the $m$ Chebyshev interpolation nodes and $\{L_{m,i}(t)\}_{i=1}^m$ are the Lagrange interpolation functions:
\begin{align*}
	t_{i}^*&=\frac{T}{2}+\frac{T}{2} \cos \frac{(2 i-1) \pi}{2 m}\quad \textup{for}\quad i=1,2,\ldots, m,\\
	L_{m,i}(t) &= \frac{(t-t_1^*)\cdots(t-t_{i-1}^*)(t-t_{i+1}^*)\cdots (t-t_{m}^*)}{(t_i^*-t_1^*)\cdots(t_i^*-t_{i-1}^*)(t_i^*-t_{i+1}^*)\cdots (t_i^*-t_{m}^*)}.	
\end{align*}
Let $\{\varphi_i\}_{i=1}^N$ be the finite element basis function and we then define the following vectors:
\begin{align}\label{def_B}
	b_0 = [(u_0, \varphi_j)]_{j=1}^N,  \qquad
	b_i = [(g_i, \varphi_j)]_{j=1}^N, \;\; i=1,2\ldots, m,\qquad B = [b_0\mid b_1\mid b_2\mid,\ldots, \mid b_m].
\end{align}
\begin{algorithm}[H]
	\caption{}
	\label{D_matrixq3}
	{\bf{Input}:}  tol, $\ell$, $M$, $A$, $B$
	\begin{algorithmic}[1]
		\State Solve $A\texttt{u}_{1} = B$;
		\For{$i=2$ to $\ell$}
		\State Solve $A\texttt{u}_{i} = M \texttt{u}_{i-1}$;		
		\EndFor
		\State Set $U = [\texttt{u}_{1} \mid \texttt{u}_{2} \mid \ldots\mid \texttt{u}_\ell]$;
		\State Set $K = U^\top A U$;
		\State $[\Psi, \Lambda] = \textup{eig}(K)$;
		\State Find minimal $r$ such that $\sum_{k=1}^r {\Lambda(k,k)}/{\sum_{k=1}^{(i+1)\ell} \Lambda(k,k)}\ge 1-\textup{tol}$;
		\State Set $Y = U\Psi(:, 1:r)(\Lambda(1:r;1:r))^{-1/2}$;
		\State {\bf return} $Y$
	\end{algorithmic}
\end{algorithm}

By the linearity, one trivial idea is to apply \Cref{algorithm3} $m+1$ times. This can be computationally expensive if $m$ is not small. Following an ensemble idea in \cite{MR4027849}, we can treat $\{b_i\}_{i=0}^m$ simultaneously (see \Cref{D_matrixq3}) since these linear systems share a common coefficient matrix; it is more  efficient than solving the linear system with a single RHS for $m+1$ times. 

Unfortunately, a downside of this approach is a loss of numerical precision. More precisely, the convergence rates are not stable when the mesh size $h$ and time step $\Delta t$ are small, or when the error is pretty small; see \Cref{table_2}. This is possibly due to the fast decay of the eigenvalues of $K$. To resolve the problem, we use the singular values of $U$ as the stopping criteria. We see that the convergence is very stable; see \Cref{table_5}.

We note that the standard singular value decomposition (SVD) of $U$ is not equivalent to the eigen decomposition of $K$. In \cite{MR3775096}, the authors showed that the output of \Cref{D_matrixq3}, $Y$, is the first $r$ columns of $Q$ in the following \Cref{def_CSVD}.
\begin{definition}\label{def_CSVD} \textup{\cite{MR3775096}}
	A core SVD of a matrix $U\in \mathbb R^{N\times n}$ is a decomposition $U=Q \Sigma R^{\top}$, where $Q \in \mathbb{R}^{N \times d}, \Sigma \in \mathbb{R}^{d \times d}$, and $R \in \mathbb{R}^{n \times d}$ satisfy
	$$
	Q^{\top} A Q=I, \quad R^{\top} R=I, \quad \Sigma=\operatorname{diag}\left(\sigma_{1}, \ldots, \sigma_{d}\right),
	$$
	where $\sigma_{1} \geq \sigma_{2} \geq \cdots \geq \sigma_{d}>0$. The values $\left\{\sigma_{i}\right\}$ are called the (positive) singular values of $U$ and the columns of $Q$ and $R$ are called the corresponding singular vectors of $U$.
\end{definition}

Next, we introduce an efficient method to compute the core SVD of $U$.
\subsection{Incremental SVD}
Incremental SVD was proposed by Brand in \cite{brand2002incremental}, the algorithm updates the SVD of a matrix when one or more columns are added to the matrix.  However, if we directly apply Brand's algorithm to compute the core SVD of $U$, we have to compute the Cholesky factorization of the weighted matrix $A$.  Recently, Fareed et al. \cite{MR3775096} modified Brand's algorithm in a weighted norm setting without computing the Cholesky factorization of $A$. 

Next we briefly discuss the main idea  of the  algorithm. 

Suppose we already have the rank-$k$ truncated core SVD of $U_i$ (the first $i$ columns of $U$):  
\begin{align}\label{SVD01}
	U_i = Q \Sigma R^{\top},\quad \textup{with}\;\; 	Q^{\top} A Q=I, \quad R^{\top} R=I, \quad \Sigma=\operatorname{diag}\left(\sigma_{1}, \ldots, \sigma_{k}\right),
\end{align}
where $\Sigma \in \mathbb{R}^{k \times k}$ is a diagonal matrix with the $k$ (ordered) singular values of $U_i$ on the diagonal, $Q \in \mathbb{R}^{N \times k}$ is the matrix containing the corresponding $k$ left singular vectors of $U_i$, and $R \in \mathbb{R}^{i \times k}$ is the matrix of the corresponding $k$ right singular vectors of $U_i$.

The main idea of Brand's algorithm is to update the matrix $U_{i+1}:=	\left[\begin{array}{ll}
	U_i& 	u_{i+1}
\end{array}\right]$ by using the SVD of $U_i$ and the new coming data $u_{i+1}$. Set $p = \|u_{i+1} - QQ^\top A u_{i+1}\|_A$, where $\|x\|_A^2 = x^\top A x $. Then the  incremental SVD algorithm arises from the following fundamental identity:
\begin{tcolorbox}
	\begin{align}\label{fundamental_increment_SVD}
		\begin{split}
			U_{i+1} =
			\left[\begin{array}{ll}
				Q &  (I - QQ^\top A)u_{i+1}/p
			\end{array}\right]\left[\begin{array}{cc}
				\Sigma & Q^{\top}A u_{i+1} \\
				0 & p
			\end{array}\right] \left[\begin{array}{cc}
				R & 0 \\
				0 & 1
			\end{array}\right]^{\top}.	
		\end{split}
	\end{align}
\end{tcolorbox}
Then the SVD of $U_{i+1}$ can be constructed by
\begin{itemize}
	
	\item [1.] finding the full SVD of $\left[\begin{array}{cc}
		\Sigma & Q^\top A u_{i+1} \\
		0 & p
	\end{array}\right] = \widetilde  Q \widetilde \Sigma \widetilde R^\top$, and then
	\item[2.] updating the SVD of $U_{i+1}$ by
	$$
	U_{i+1}=\left(	\left[\begin{array}{ll}
		Q &  (I - QQ^\top A)u_{i+1}/p
	\end{array}\right] \widetilde Q\right) \widetilde \Sigma \left(\left[\begin{array}{cc}
		R & 0 \\
		0 & 1
	\end{array}\right] \widetilde R \right)^{\top}.
	$$
	
\end{itemize}

In practice, small numerical errors cause a loss of orthogonality, hence   reorthogonalizations are needed and the computational cost can be high. Very recently,  Zhang \cite{Zhang2022isvd} proposed an efficient way to reduce the complexity; see \cite[Section 4.2]{Zhang2022isvd} for more details.

Now we  apply the new incremental SVD algorithm in \cite{Zhang2022isvd} to find the core SVD of the matrix $U$ in \Cref{D_matrixq3}. Obviously, the  main computational cost is to solve the matrix equations for $\ell$ times.  Although solving a linear system with multiple RHS is more efficient than solving multiple linear systems with a single RHS, it is still expensive if we have a large amount of RHS. To reduce the computational cost, we  find a low rank approximation of the matrix $B$ in \eqref{def_B}.

Let $B = Q_F\Sigma_F R_F^\top$ be a thin SVD\footnote{In Matlab, we use \texttt{svd}$(B$, {\color{magenta} `econ'}) to compute the thin SVD of $B$, when $B$ has a large number of columns, we recommend the incremental SVD.} of $B$, the number of the columns of $Q_F$ is small if the data is low rank or approximately low rank. We summarize the above discussions  in  \Cref{algorithm4}.
\begin{algorithm}[H]
	\caption{}
	\label{algorithm4}
	{\bf{Input}:}  $M\in \mathbb R^{N\times N}$, $A\in \mathbb R^{N\times N}$, $ \{b_i\}_{i=0}^m$, \texttt{tol}
	\begin{algorithmic}[1]
		\State 	 $[Q_F, \Sigma_F, R_F]=$ \texttt{svd}$(B$, {\color{magenta} `econ'});
		\State Find the minimal $p$ such that $\Sigma_F(p+1,p+1)\le \texttt{tol}$;
		\State Let $Q_p$ be the first $p$ columns of $Q_F$;
		\State Solve  $	A\texttt{U}_{1} = Q_p$;
		\State $Q=[]$, $\Sigma=[]$, $R=[]$;
		\State 	 $[Q, \Sigma, R]=$ \texttt{Fullisvd}($Q, \Sigma, R, A$, $\texttt{U}_{1}$, \texttt{tol});  \hspace{3.5 cm} \% \cite[Algorithm 9]{Zhang2022isvd}
		
		\For{$i=2$ to $\ell$}
		\State Solve $ A\texttt{U}_{i} = M\texttt{U}_{i-1}$;	
		\State $[Q, \Sigma, R] =$ \texttt{Fullisvd}($Q, \Sigma, R, A$, $\texttt{U}_{i}$, \texttt{tol}); \hspace{2.95 cm} \% \cite[Algorithm 9]{Zhang2022isvd}
		\EndFor

		\State {\bf return} $Q$, $Q_F$, $\Sigma_F$, $R_F$
	\end{algorithmic}
\end{algorithm}

\subsection{ROM}
Let $r$ be the number of the column of $Q$ in \Cref{algorithm4}, and  define $V_r$ by
\begin{align}\label{NewPODbasis2}
	V_r = \textup{span}\{\widetilde \varphi_1, \widetilde\varphi_2,\ldots, \widetilde\varphi_{r}\}, \quad \textup{with}\quad  \widetilde \varphi_j = \sum_{k=1}^N Q(k,j)  \varphi_k,\quad \textup{for } j=1,2\cdots, r.
\end{align}

We are looking  for a function $u_r (t) \in C((0, T]; V_r)$ satisfying
\begin{align}\label{New_POD_reduced2}
	\begin{split}
		\frac{\mathrm{d}}{\mathrm{d} t} (u_{r}(t), v_h)+a(u_r(t), v_h) &= (f, v_h)\qquad\qquad\;\;\;\; \forall v_h\in V_r, \\
		(u_{r}(0), v_h) &=(u_0, v_h)\quad\quad\quad\quad\quad\forall v_h\in V_r.
	\end{split}
\end{align}
Since $u_r(t) \in V_r$ holds, we make the Galerkin ansatz of the form
\begin{align}\label{sec4_coe}
	u_r(t)=\sum_{j=1}^{r} u_{j}^{r}(t) \widetilde \varphi_j .	
\end{align}
We insert \eqref{sec4_coe} into \eqref{New_POD_reduced2} and define the modal coefficient vector
$$
U_{r}(t)=\left(u_{j}^{r}(t)\right)_{1 \leq j \leq r} \quad \text { for } t \in[0, T].
$$
From \eqref{New_POD_reduced2} we derive the linear system of ordinary differential equations
\begin{align}\label{semidiscrete4}
	\begin{split}
		M_{r}{U}'_{r}(t)+A_{r} U_{r}(t)&=\sum_{i=1}^m f_{i}(t) b_i^r\quad t \in (0, T], \\
		U_{r}(0)&= b_0^r.		
	\end{split}
\end{align}
where
\begin{align*}
	M_r = Q^\top M Q  \in \mathbb R^{r\times r}, \;\; 	A_r = Q^\top A Q \in \mathbb R^{r\times r}, \;\;  b_i^r =  Q^\top b_i  \in \mathbb R^{r},  i=0,1,2,\ldots, k.
\end{align*}
Note that \eqref{semidiscrete4} can then be solved by using an appropriate method for the time discretization.	

\subsection{Numerical experiments} 
In this section, we present several numerical tests to show the accuracy and efficiency of our new method. In all examples, we let  $\Omega = (0,1)^2$ in 2D and $\Omega = (0,1)^3$ in 3D and final time $T=1$.	The source term $f$ is chosen so that the exact solution is 
\begin{align*}
	u = \begin{cases}
		\sin(t) \cos(tx) x\sin(x - 1)\sin(y)(y - 1)& \textup{in} \;\;2D,\\
		\sin(t) \cos(tx) x\sin(x - 1)\sin(y)(y - 1)\sin(z)(z - 1)& \textup{in} \;\; 3D.
	\end{cases}
\end{align*}

First, we choose $\ell=5, m = 8$, $\texttt{tol} = 10^{-10}$ and  apply \Cref{D_matrixq3} to get the ROM \eqref{semidiscrete4}, then we use  the backward Euler for the first step and then apply the two-steps backward differentiation formula (BDF2) for the time discretization and take time step $\Delta t = h^{(k+1)/2}$, here $h$ is the mesh size and $k$ is the polynomial degree.  We report the error at the final time $T=1$ and wall time  in \Cref{table_2}. We see that the convergence rate of the ROM is the same as the standard finite element method when $k=1$. However,  the convergence rates are not stable due to a loss of numerical precision; see the case of $k=2$ in  \Cref{table_2}.
\begin{table}[H]
	\centering
	{
		\begin{tabular}{c|c|c|c|c|c|cc}
			\Xhline{1pt}

			\multirow{2}{*}{Degree}
			&\multirow{2}{*}{$\frac{h}{\sqrt{2}}$}
			&\multirow{2}{*}{Wall time (s)}	
			&\multicolumn{2}{c|}{$\|u - u_r\|_{L^2(\Omega)}$}	
			&\multicolumn{2}{c}{$\|\nabla(u-u_r)\|_{L^2(\Omega)}$}	
			\\
			\cline{4-7}
			& &&Error &Rate			&Error &Rate
			\\
			\cline{1-7}
			\multirow{5}{*}{ $k=1$}
			&	$2^{-3}$	&	 0.1876		&9.3278E-04	&		-	&	 	 1.9653E-02			&	-	&	  \\ 
			&	$2^{-4}$	&	 0.0570		&2.3677E-04	&		1.98	&	 	 9.8978E-03			&	0.99	&	  \\ 
			&	$2^{-5}$	&	 0.0922		&5.9423E-05	&		1.99	&	 	 4.9579E-03			&	1.00	&	  \\ 
			&	$2^{-6}$	&	 0.2523		&1.4873E-05	&		2.00	&	 	 2.4801E-03			&	1.00	&	  \\ 
			&	$2^{-7}$	&	 1.0610		&3.7233E-06	&		2.00	&	 	 1.2402E-03			&	1.00	&	  \\

			\cline{1-7}
			\multirow{5}{*}{ $k=2$}
			&	$2^{-3}$	&	0.1552& 		2.3143E-05	&		-	&	 	     1.4701E-03			&	-	&	  \\ 
			&	$2^{-4}$	&	0.0943& 		2.8966E-06	&		3.00	&	 	 3.7004E-04			&	1.99	&	  \\ 
			&	$2^{-5}$	&	0.2106& 		3.7517E-07	&		2.95	&	 	 9.2685E-05			&	2.00	&	  \\ 
			&	$2^{-6}$	&	0.7487& 		1.0784E-07	&		1.80	&	 	 2.3222E-05			&	2.00	&	  \\ 
			&	$2^{-7}$	&	3.4102& 		9.7986E-08	&		0.14	&	 	 5.9637E-06			&	1.96	&	   \\

			\Xhline{1pt}

		\end{tabular}
	}
	\caption{\Cref{D_matrixq3}: 2D: The errors for $u_r$ and $\nabla u_r$ at the final time $T=1$.}\label{table_2}
\end{table}

Second, we test the efficiency and accuracy of \Cref{algorithm4} under the same problem data and setting as above. We report the error at the final time $T=1$ and wall time  in \Cref{table_4,table_5}. We see that the convergence rate of the ROM is the same as the standard finite element method. 

\begin{table}[H]
	\centering
	{
		\begin{tabular}{c|c|c|c|c|c|cc}
			\Xhline{1pt}

			\multirow{2}{*}{Degree}
			&\multirow{2}{*}{$\frac{h}{\sqrt{2}}$}
			&\multirow{2}{*}{Wall time (s)}	
			&\multicolumn{2}{c|}{$\|u - u_r\|_{L^2(\Omega)}$}	
			&\multicolumn{2}{c}{$\|\nabla(u-u_r)\|_{L^2(\Omega)}$}	
			\\
			\cline{4-7}
			& &&Error &Rate			&Error &Rate
			\\
			\cline{1-7}
			\multirow{5}{*}{ $k=1$}
			&	$2^{-3}$	&	 0.1438		&9.3549E-04&		-	    &	 	 1.9653E-02			&	-	    &	  \\ 
			&	$2^{-4}$	&	 0.0473		&2.3727E-04&		1.98	&	 	 9.8978E-03			&	0.99	&	  \\ 
			&	$2^{-5}$	&	 0.0779		& 5.9521E-05	&	2.00	&	 	 4.9579E-03			&	1.00	&	  \\ 
			&	$2^{-6}$	&	 0.2351		&1.4891E-05	&		2.00	&	 	 2.4801E-03			&	1.00	&	  \\ 
			&	$2^{-7}$	&	 1.0067		&3.7232E-06	&		2.00	&	 	 1.2402E-03			&	1.00	&	  \\

			\cline{1-7}
			\multirow{5}{*}{ $k=2$}
			&	$2^{-3}$	&	0.1893& 		2.3208E-05	&		-	&	 	     1.4701E-03			&	-	&	  \\ 
			&	$2^{-4}$	&	0.1027& 		2.8977E-06	&		3.00	&	 	 3.7004E-04			&	1.99	&	  \\ 
			&	$2^{-5}$	&	0.2585& 	3.6209E-07	&		3.00	&	 	 9.2674E-05			&	2.00	&	  \\ 
			&	$2^{-6}$	&	0.8738& 		4.5252E-08	&		3.00	&	 	 2.3179E-05			&	2.00	&	  \\ 
			&	$2^{-7}$	&	3.8290& 		5.6561E-09	&		3.00	&	 	 5.7954E-06			&	2.00	&	  \\

			\Xhline{1pt}

		\end{tabular}
	}
	\caption{\Cref{algorithm4}: 2D: The errors for $u_r$ and $\nabla u_r$ at the final time $T=1$.}\label{table_4}
\end{table}

\begin{table}[H]
	\centering
	{
		\begin{tabular}{c|c|c|c|c|c|cc}
			\Xhline{1pt}

			\multirow{2}{*}{Degree}
			&\multirow{2}{*}{$\frac{h}{\sqrt{2}}$}
			&\multirow{2}{*}{Wall time (s)}	
			&\multicolumn{2}{c|}{$\|u - u_r\|_{L^2(\Omega)}$}	
			&\multicolumn{2}{c}{$\|\nabla(u-u_r)\|_{L^2(\Omega)}$}	
			\\
			\cline{4-7}
			& &&Error &Rate			&Error &Rate
			\\
			\cline{1-7}
			\multirow{6}{*}{ $k=1$}
			&	$2^{-2}$	&	 0.2370		&9.5386E-04	&		-	&	 	 1.0151E-02			&	-	&	  \\ 
			&	$2^{-3}$	&	 0.2663		&2.6633E-04	&		1.84	&	 	 5.3315E-03			&	0.93	&	  \\ 
			&	$2^{-4}$	&	 2.8694		&6.8562E-05	&		1.96	&	 	 2.7000E-03			&	0.98	&	  \\ 
			&	$2^{-5}$	&	 15.693		&1.7269E-05	&		1.99	&	 	 1.3544E-03			&	1.00	&	  \\ 
			&	$2^{-6}$	&	 145.73		&4.3254E-06	&		2.00	&	 	 6.7774E-04			&	1.00	&	  \\

			\cline{1-7}
			\multirow{5}{*}{ $k=2$}
			&	$2^{-2}$	&	0.3842& 		2.3139E-05	&		-	&	 	 1.4701E-03			&	-	&	  \\ 
			&	$2^{-3}$	&	1.8871& 		7.3028E-06	&		3.00	&	 	 4.9435e-04			&	1.99	&	  \\ 
			&	$2^{-4}$	&	11.790& 		9.0664E-07	&		3.00	&	 	 1.2573E-04			&	2.00	&	  \\ 
			&	$2^{-5}$	&	120.47& 		1.1318E-07	&		3.00	&	 	 3.1585E-05			&	2.00	&	  \\ 
			&	$2^{-6}$	&	1294.7& 		1.4144E-08	&		3.00	&	 	 7.9064E-06			&	2.00	&	  \\

			\Xhline{1pt}

		\end{tabular}
	}
	\caption{\Cref{algorithm4}: 3D: The errors for $u_r$ and $\nabla u_r$ at the final time $T=1$.}\label{table_5}
\end{table}

\section{Conclusion} In the paper, we proposed a new reduced order model (ROM) of linear parabolic PDEs. We proved that the singular values  of the Krylov sequence are exponential decaying (\Cref{UAU}). Furthermore, under some assumptions, we proved that the solutions of the ROM and the FEM have the same convergence rates (\Cref{Main_res}).  There are many interesting directions for the future research. First, we will investigate the non-homogeneous Dirichlet boundary conditions and the corresponding Dirichlet boundary control problems, such as \cite{MR4381532,MR4169689,MR4057428,MR3992054,MR3831243}.
Second, we will consider the ROM for hyperbolic PDEs and Maxwell's equations \cite{monk2019finite,MR2059447}.  Third, we will  apply our result for realistic problems, such as  inverse problems,  shape optimization problems and data assimilation problems. Our long term goal is to build  accurate ROMs for nonlinear PDEs.

\bibliographystyle{siamplain}
\bibliography{Model_Order_Reduction}

\end{document}